\documentclass[12pt,a4paper]{article}

\usepackage[T1]{fontenc}
\usepackage[utf8]{inputenc} 
\usepackage[english]{babel}
\usepackage{listings}

\usepackage{amsmath, amsthm, mathtools }

\usepackage{libertine}      
\usepackage[libertine]{newtxmath} 

\usepackage{microtype}

\usepackage{geometry}
\theoremstyle{plain}
\newtheorem{theorem}{Theorem}[section]
\newtheorem{lemma}[theorem]{Lemma}
\newtheorem{proposition}[theorem]{Proposition}

\theoremstyle{definition}
\newtheorem{definition}[theorem]{Definition}

\theoremstyle{remark}
\newtheorem{remark}[theorem]{Remark}

\usepackage{hyperref}
\hypersetup{
  colorlinks=true,
  linkcolor=blue,
  citecolor=blue,
  urlcolor=blue
}

\newcommand{\NN}{\mathbb{N}}
\newcommand{\RR}{\mathbb{R}}
\newcommand{\X}{\mathcal{X}}

\newcommand{\cL}{\mathcal{L}}

\newcommand{\cE}{\mathcal{E}}
\newcommand{\cD}{\mathcal{D}}
\newcommand{\cK}{\mathcal{K}}

\newcommand{\ip}[2]{\left\langle #1,#2 \right\rangle}
\newcommand{\norm}[1]{\left\|#1\right\|}
\newcommand{\dnorm}[1]{\left\|#1\right\|_*}

\DeclareMathOperator*{\spn}{span}

\usepackage{titlesec}
\titleformat{\section}
  {\large\bfseries}
  {\thesection}{1em}{}

\titleformat{\subsection}
  {\normalsize\bfseries}
  {\thesubsection}{1em}{}

\title{\textbf{Dictionary-Restricted First-Order Descent Methods: Bounds and Convergence Rates}}
\author{
Miguel Berasategui\thanks{Departamento de Matemática, - - Pab I, Fac-
ultad de Ciencias Exactas y Naturales, Universidad de Buenos Aires,
Buenos Aires, 1428, Argentina}
  \and
  Pablo M. Berná\thanks{Corresponding Author; Departamento de Matemáticas, CUNEF Universidad, 28040 Madrid (Spain), pablo.berna@cunef.edu}
  \and
  Antonio Falcó\thanks{Departamento de Matemáticas, Física y Ciencias Tecnológicas, Universidad Cardenal Herrera-CEU, CEU
Universities, San Bartolomé 55, Alfara del Patriarca (Valencia), 46115, Spain}
}
\date{}

\begin{document}
\maketitle

\begin{abstract}
This paper develops a general theory for first-order descent methods whose search directions are restricted to a prescribed dictionary in a reflexive Banach space. Instead of assuming that the linear span of the dictionary is dense, as in the classical Proper Generalized Decomposition framework of Falcó and Nouy or in the universality approach of Berná and Falcó, we introduce a geometric condition based on norming sets that guarantees density through a duality argument. This makes it possible to treat dictionaries arising from tensor formats, neural network units, and other nonlinear or parameterized approximation families within a unified setting. On the algorithmic side, we analyze a simple greedy update rule in which each iterate is obtained by minimizing the energy functional along one direction from the dictionary. Under mild differentiability, Lipschitz continuity, and ellipticity assumptions on the objective, we derive explicit quantitative descent bounds and sharp convergence rates. These include algebraic rates that improve those of classical steepest-descent schemes in Banach spaces, as well as arbitrarily high polynomial rates and exponential convergence in a critical regime. The results apply broadly to convex variational problems, high-dimensional approximation, and structured optimization methods that rely on restricted or compressed search directions.
\end{abstract}

\section{Introduction}

First-order methods are fundamental tools for the numerical solution of large-scale convex optimization and variational problems. In many modern applications, however, the search directions available to the algorithm are not arbitrary but must be selected from a prescribed dictionary that encodes structural constraints, low-dimensional representations, or model-based approximation mechanisms. Classical examples include the matching pursuit algorithm introduced by Mallat and Zhang \cite{MallatZhang1993}, coordinate descent methods such as those analyzed by Wright \cite{Wright2015}, and tensor-based schemes like the Proper Generalized Decomposition developed by Falcó and Nouy \cite{FN2012}. These dictionary-restricted approaches arise naturally in sparse approximation, machine-learning-based solvers, and high-dimensional tensor representations, yet a unified convergence theory for convex variational problems in Banach spaces has remained elusive. Existing analyses typically rely on structural assumptions ensuring that the dictionary spans a dense subset of the ambient space or are tailored to specific representation formats. In this work, we introduce a geometric principle based on norming sets that ensures density automatically through a dual characterization of the dictionary, leading to a broad framework for restricted first-order descent methods. This setting allows us to derive explicit descent bounds and sharp convergence rates under mild assumptions on the objective functional, thereby strengthening classical steepest-descent results and providing a common theoretical foundation for modern structured optimization techniques.

Dictionary-restricted methods belong to a broader class of structured first-order schemes in which the descent directions are constrained by sparsity, low-rank structure, or parameterized families. This perspective encompasses classical matching pursuit techniques in signal processing, beginning with the seminal work of Mallat and Zhang \cite{MallatZhang1993}, as well as subsequent developments in greedy approximation such as those presented by Temlyakov \cite{Temlyakov2011}. It also includes coordinate descent and block-coordinate methods, whose behavior is now well understood through analyses such as Wright’s review \cite{Wright2015} and the complexity bounds of Nesterov \cite{Nesterov2012}. On the high-dimensional side, tensor-based strategies such as the Proper Generalized Decomposition of Falcó and Nouy \cite{FN2012} and the model-reduction perspective of Nouy \cite{Nouy2010} demonstrate how structural constraints may dramatically reduce computational cost while preserving approximation accuracy. Despite this wide range of applications, the theoretical understanding of descent algorithms restricted to general dictionaries remains fragmented. Most existing results rely on specific algebraic structures, orthogonality properties, or representation formats. The goal of this work is to unify these strands within a single Banach-space framework, identify a geometric principle that guarantees approximation capacity of the dictionary, and establish quantitative convergence bounds that extend and sharpen those of classical steepest descent and greedy approximation techniques.

\subsection{Literature Review}

Dictionary-based approximation methods have been extensively studied across optimization, signal processing, and high-dimensional numerical analysis. One of the earliest and most influential examples is the matching pursuit algorithm introduced by Mallat and Zhang \cite{MallatZhang1993}, which initiated the modern theory of greedy approximation and sparse representations. Further developments in nonlinear approximation, including orthogonal greedy variants and convergence analyses, were systematically developed by Temlyakov \cite{Temlyakov2011} and by DeVore and Temlyakov in their foundational work on greedy algorithms \cite{DeVoreTemlyakov1996}.

Coordinate descent and block-coordinate minimization methods form another important class of direction-restricted optimization schemes. Their convergence properties and complexity behavior have been clarified in a number of works, including the comprehensive survey by Wright \cite{Wright2015} and the complexity bounds for large-scale problems established by Nesterov \cite{Nesterov2012}. These methods highlight how structural restrictions on the search directions can lead to substantial computational gains while preserving descent guarantees.

In high-dimensional settings, tensor-structured methods provide a powerful alternative to standard discretizations. The Proper Generalized Decomposition (PGD) proposed by Falcó and Nouy \cite{FN2012} offers a progressive greedy strategy for building low-rank tensor approximations to nonlinear variational problems. Nouy’s review \cite{Nouy2010} further illustrates the versatility of PGD as a model-reduction technique for parameter-dependent and stochastic equations, while the tensor-train decomposition introduced by Oseledets \cite{Oseledets2011} provides an alternative low-rank representation with favorable computational properties.

More recent advances include flexible dictionary-based approaches for addressing nonlinear convex variational problems, such as the universality framework developed by Berná and Falcó \cite{BF2025}. However, these methods typically assume that the dictionary spans a dense linear subspace of the underlying Banach space or rely on structural properties specific to the representation format. The present work contributes to this literature by introducing a general geometric condition—based on norming sets—that guarantees approximation capacity independently of such structural assumptions. This leads to a unified and broadly applicable theory for first-order descent methods restricted to arbitrary dictionaries.

\subsection{Improvements over Previous Frameworks}

The present work introduces some advances over the earlier
dictionary-based variational frameworks developed by
Falc{\'o}--Nouy~\cite{FN2012} and
Bern{\'a}--Falc{\'o}~\cite{BF2025}.  

These improvements concern the generality of dictionaries that may be
used, the theoretical tools required to guarantee approximation
properties, the sharpness of convergence results, and the conceptual
simplicity of the analysis.

\paragraph{\textbf{(1) A unified greedy framework valid for arbitrary radial dictionaries}}
The PGD analysis in~\cite{FN2012} is strongly tied to the
geometry of tensor Banach spaces and to the specific choice of rank-one 
tensor dictionaries.  
Similarly, the universality framework in~\cite{BF2025} relies on
parametric dictionaries such as Tucker tensors or neural networks and
uses a multivalued dictionary-optimization map.  
In contrast, the present work formulates a single greedy update rule,
\[
u_{m+1} = u_{m} + z_m,
\qquad
z_m \in \arg\min_{z \in \mathcal{D}} \cE(u_m + z),
\]
that applies to \emph{any} radial dictionary~$\mathcal{D}$, without
imposing linear, tensorial, or parametric structure.
The framework accommodates CP, Tucker, and tensor-train formats,
neural network units with bounded parameters, abstract cones,
nonlinear parameterized families, and general radial approximation
sets, thereby removing many of the structural restrictions present in
previous approaches.

\paragraph{\textbf{(2) Density obtained as a theorem, with quantitative control.}}
In both~\cite{FN2012} and~\cite{BF2025}, density of $\operatorname{span}(\mathcal D)$ is either built into the ambient tensor setting or imposed as a standing assumption to ensure approximability of the minimizer.
Here we do not postulate density separately: it follows from a geometric dual condition.
Namely, if the unit slice $\mathcal K:=\mathcal D\cap S_{\X}$ is norming for $\X^*$, then by Hahn--Banach one has
$\overline{\operatorname{span}(\mathcal D)}=\X$.
Beyond recovering density automatically, the norming property provides a \emph{quantitative} parameter $C_{\mathcal K}$ that measures how well the dictionary controls the dual norm and enters explicitly in our descent and convergence bounds.
This replaces an existential completeness assumption by a verifiable geometric criterion and yields sharper, constant-explicit estimates.

\paragraph{\textbf{(3) Sharper quantitative convergence analysis}}
The PGD theory of~\cite{FN2012} provides general convergence
but does not yield explicit rates except in special Hilbert settings.
The universality results of~\cite{BF2025} show that the method recovers
the classical $O(m^{-1})$ rate associated with steepest descent in
Banach spaces.
By contrast, the present work proves refined asymptotic bounds that
depend on the smoothness and ellipticity exponents $(p,s)$ in
Assumptions~{\rm(A)}--{\rm(B)}.  
When $s>p+1$, we show that
\[
\cE(u_m) - \cE(u^*) = O\!\left(m^{-\,p/(s-1-p)}\right),
\]
and in the critical case $s=p+1$ we obtain arbitrarily high polynomial
rates and even exponential convergence.
These results provide an asymptotically optimal characterization of
dictionary-restricted descent under minimal first-order regularity.

\paragraph{\textbf{(4) A streamlined and more transparent proof structure}}
The analysis presented here avoids some of the technical machinery used
in previous frameworks.
It circumvents the multivalued dictionary-optimization map introduced
in~\cite{BF2025} and the tensor weak-topology arguments used in
\cite{FN2012}.
All estimates are derived directly from the basic assumptions and from
elementary tools in convex analysis on reflexive Banach spaces.
The resulting proofs are shorter, more transparent, and more modular,
while retaining full generality.

\paragraph{\textbf{(5) Compatibility with both PDE and machine-learning dictionaries}}
Whereas PGD is tied to tensor-product representations, and the
universality framework of~\cite{BF2025} emphasizes neural-network-based
dictionaries, the present approach treats both classes on equal footing.
The theory applies to dictionaries arising in high-dimensional PDE
model reduction as naturally as to dictionaries built from neural
activation units or parametric families used in learning-based
variational solvers.
The same convergence guarantees hold uniformly across these distinct
settings.

\paragraph{\textbf{(6) New examples and applications}}
The generality of the framework is illustrated by examples ranging from 
nonlinear PDE energies such as the $p$-Laplacian to abstract convex
functionals defined on Banach spaces with nonstandard growth.
These examples would fall outside the scope of the tensor-based PGD
approach or require substantial modifications in the universality
framework.
The ability to combine general dictionaries—tensors, neural units, or 
radial constructions—with sharp convergence results enables applications
in high-dimensional approximation, sparse or structured optimization,
and machine-learning–inspired variational problems.

\medskip
Overall, the present work may be viewed as a synthesis and strengthening
of the PGD and universality frameworks: it removes structural
constraints on the dictionary, derives density from dual geometric
principles, establishes sharper and more flexible convergence rates, and
extends the applicability of dictionary-restricted descent methods to a
broader class of variational problems.

\section{Approximating Solutions for Nonlinear Convex Problems over Weakly-Closed Balanced Cones}

\paragraph{Setting}
Let $(\X,\norm{\cdot})$ be a Banach space over $\RR$. Let $0<p\le 1$, $\cE:\X\to\mathbb{R}$ be Fr\'echet differentiable and consider the following properties: 
\begin{itemize}
\item[(A+)] \emph{$p$-Lipschitz derivative:} There is $\mathcal{L}>0$  such that $\cE'$ is $\cL$-$p$-Lipschitz continuous on $\X$. In other words, for all $u, w\in\X$, 
\[
\dnorm{\cE'(u)-\cE'(w)}\le \mathcal{L}\,\|u-w\|^p.
\]
\item[{\rm(A)}] \emph{Derivative that is $p$-Lipschitz on bounded sets}: For each bounded set $K\subset \X$ there is $\mathcal{L}_K>0$ such that $\cE'$ is $\cL_K$-$p$-Lipschitz continuous on $K$. In other words, for all $u, v\in K$,
\[
\dnorm{\cE'(u)-\cE'(w)}\le \mathcal{L}_K\,\|u-w\|^p.
\]
\item[{\rm(B)}] \emph{$\X$-ellipticity of order $s$}: There exist $\alpha>0$ and $s>1$ such that for all $x,y \in\X$,
\[
\ip{\cE'(x)-\cE'(y)}{x-y}\ \ge\ \alpha\,\norm{x-y}^{\,s}.
\]
\end{itemize}

Let $\X$ be a real vector space. A set $C \subset \X$ is called a \emph{cone} if for all $x \in C$ and $\lambda \ge 0$
it holds $\lambda\, x \in C.$ We say that a cone $C \subset \X$ satisfying $C = -C$ is called a \emph{balanced cone}.

Let $(\X,\|\cdot\|)$ be a reflexive Banach space and denote by $S_{\X}=\{x \in \X: \|x\| = 1\}$ its unit sphere. A set $\cD \subset \X$ is called 
a \emph{radial dictionary}, if $\cD$ is a weakly closed (possibly non-convex) balanced cone.
For $x\in\X$, define
\begin{align}
\sigma(x)\ :=\ \sup_{w\in\cK_0}\bigl|\ip{\cE'(x)}{w}\bigr| \text{ where } \cK_0:=\mathcal{D} \cap S_{\X}. \label{sigma}
\end{align}
Here, we are going to attack the following optimization problem
\begin{eqnarray}\label{main}
	\min_{x\in \X} \mathcal E(x),
\end{eqnarray}
where $\X$ is a reflexive Banach space and $\mathcal E: \X\rightarrow \mathbb R$ is a Fréchet differentiable functional satisfying Assumptions {\rm(A)} and {\rm(B)}, or the stronger (A+) and {\rm(B)}. To these ends, we will use the following greedy algorithm (see \cite{BF2025}): 
\begin{enumerate}
	\item $u_0=0$;
	\item for $m\geq 1$, $u_m\in u_{m-1}+	\boldsymbol{\nabla}_{u_{m-1}}(\mathcal E;\cD)$,
\end{enumerate}
where for each $u\in \X$, 
\begin{align}
\boldsymbol{\nabla}_u(\mathcal E;\cD)=\left\lbrace z\in \cD: \cE(u+z)\le \cE(u+v) \;\forall v\in \cD\right\rbrace. \label{mingreedy}
\end{align}
From now on, $(u_m)_{m\in\NN_0}$ will always denote this sequence. 

\begin{remark}\label{remarknabla1}
	Observe that for radial dictionaries in reflexive Banach spaces, 
	$$\boldsymbol{\nabla}_u(\mathcal E;\cD):= \arg \min_{v \in \cD}\mathcal E(u+v) \neq \emptyset.$$
	\end{remark}
This algorithm was studied in \cite{BF2025} for the case $p=1$. In order to extend it to all $0<p\le 1$, we will have to adapt some of the proofs or give new ones, depending on the case. The next two Lemmas from the literature collect some properties of functionals that will be useful in our setting. 
\begin{lemma}\label{lemmabus} \cite[Proposition 2.1]{BF2025}, \cite[Lemma 2.2]{Canuto2005}, \cite[Lemma 3]{FN2012}, \cite[Proposition 41.8 (H1)]{Zeidler1985}. Let $\cE$ be a Fr\'echet differentiable functional on a reflexive Banach space $\X$. Suppose that Assumption {\rm(B)} holds and that $\cE'$ is continuous. Then: 
\begin{enumerate}
\item[(a)] For all $x, y\in \X$, 
\begin{align*}
\cE(x)-\cE(y)\ge \langle \cE'(y), x-y\rangle +\frac{\alpha}{s}\|x-y\|^s. 
\end{align*}
\item[(b)] $\cE$ is strictly convex. 
\item[(c)] $\cE$ is bounded from below and coercive, that is 
\begin{align*}
\lim_{\|x\|\rightarrow \infty}\cE(x)=+\infty. 
\end{align*}
\item[(d)] $\cE$ is weakly sequentially lower semicontinuous. 
\end{enumerate}
\end{lemma}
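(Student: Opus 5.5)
The plan is to derive (a) from the fundamental theorem of calculus along the segment joining $y$ to $x$, and then obtain (b)–(d) as consequences of (a) by standard convex-analysis arguments.

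\emph{Step (a).} Since $\cE$ is Fréchet differentiable with continuous derivative, the map $t\mapsto \cE(y+t(x-y))$ is $C^1$ on $[0,1]$. Hence
\[
\cE(x)-\cE(y)=\int_0^1 \ip{\cE'(y+t(x-y))}{x-y}\,dt .
\]
I subtract $\ip{\cE'(y)}{x-y}$ from both sides. I then apply (B) to the pair $y+t(x-y)$ and $y$, whose difference is $t(x-y)$. This gives
\[
\ip{\cE'(y+t(x-y))-\cE'(y)}{t(x-y)}\ge \alpha t^s\|x-y\|^s .
\]
Dividing by $t>0$, the integrand is bounded below by $\alpha t^{s-1}\|x-y\|^s$. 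Integrating over $[0,1]$ produces the required term $\frac{\alpha}{s}\|x-y\|^s$.

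\emph{Step (b).} Strict convexity follows from (a) by the usual supporting-hyperplane argument. Take $x\neq y$, $\lambda\in(0,1)$ and $z=\lambda x+(1-\lambda)y$. Apply (a) twice with base point $z$, once towards $x$ and once towards $y$, and combine the two inequalities with weights $\lambda$ and $1-\lambda$. The linear terms cancel, because $\lambda(x-z)+(1-\lambda)(y-z)=0$. What remains is
\[
\lambda\cE(x)+(1-\lambda)\cE(y)-\cE(z)\ge \tfrac{\alpha}{s}\bigl(\lambda\|x-z\|^s+(1-\lambda)\|y-z\|^s\bigr)>0 .
\]

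\emph{Step (c).} Coercivity comes from (a) with $y=0$:
\[
\cE(x)\ge \cE(0)-\dnorm{\cE'(0)}\|x\|+\tfrac{\alpha}{s}\|x\|^s .
\]
Since $s>1$, the right-hand side tends to $+\infty$ as $\|x\|\to\infty$. As a function of $r=\|x\|$ it is also bounded below on $[0,\infty)$, being continuous and tending to infinity, which gives boundedness from below.

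\emph{Step (d).} For weak sequential lower semicontinuity, let $x_n\rightharpoonup x$. By (a) with base point $x$,
\[
\cE(x_n)\ge \cE(x)+\ip{\cE'(x)}{x_n-x}.
\]
Because $\cE'(x)\in\X^*$, the term $\ip{\cE'(x)}{x_n-x}$ tends to $0$ by weak convergence. Taking $\liminf$ yields $\liminf_n \cE(x_n)\ge \cE(x)$.

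The only step with real content is (a). The point needing care there is justifying the integral formula, namely that continuity of $\cE'$ makes $t\mapsto \ip{\cE'(y+t(x-y))}{x-y}$ continuous so that the fundamental theorem of calculus applies. Alternatively, the mean value theorem together with a monotonicity argument can replace the integral if one prefers to avoid integration.
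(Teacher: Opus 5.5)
Your proof is correct and complete. In (a) you integrate the bound $\langle \cE'(y+t(x-y))-\cE'(y),x-y\rangle\ge \alpha t^{s-1}\|x-y\|^s$ over $[0,1]$, and you derive (b)--(d) from (a); this is the standard argument underlying the references the paper cites, and the paper itself gives no proof beyond those citations. Your self-contained version also confirms the paper's side remark: only continuity of $\cE'$ is used (neither uniform continuity on bounded sets nor reflexivity), and your mean-value/monotonicity variant removes the need for even that.
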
	
\begin{proof}
The results follow at once from the cited work. We do note that the proofs do not use uniform continuity on bounded sets (cf. \cite[Lemma 2.2]{Canuto2005}, \cite[Lemma 3]{FN2012}). 
\end{proof}

\begin{lemma}\label{lemmawelldefined} Let $\X$ be a reflexive Banach space, and let $\cE$ be a Frechét differentiable functional on $\X$ with $\cE'$ continuous and for which Assumption {\rm(B)} holds. Then: 
\begin{enumerate}
\item[(a)] There is a unique $u^*$ for which the minimum in \eqref{main} is attained, and it is the only $x\in \X$ for which $\cE'(x)=0$. 
\item[(b)] The set $\boldsymbol{\nabla}_u(\mathcal E;\cD)$ in \eqref{mingreedy} is not empty. 
\end{enumerate}
\end{lemma}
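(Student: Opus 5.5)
Both parts follow from the direct method of the calculus of variations, with Lemma~\ref{lemmabus} supplying all the needed properties of $\cE$.

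\textbf{Part (a).} Set $m:=\inf_{x\in\X}\cE(x)$. By Lemma~\ref{lemmabus}(c) this is finite, and we may take a minimizing sequence $(x_n)$ with $\cE(x_n)\to m$.
\begin{enumerate}
\item[(i)] Coercivity (again Lemma~\ref{lemmabus}(c)) makes $(x_n)$ bounded.
\item[(ii)] Since $\X$ is reflexive, the Eberlein--\v{S}mulian theorem yields a subsequence converging weakly to some $u^*\in\X$.
\item[(iii)] Weak sequential lower semicontinuity, Lemma~\ref{lemmabus}(d), gives $\cE(u^*)\le \liminf \cE(x_n)=m$, so $u^*$ is a minimizer.
\item[(iv)] Uniqueness follows from strict convexity, Lemma~\ref{lemmabus}(b): if $u_1\neq u_2$ were both minimizers, then $\cE\bigl(\tfrac{u_1+u_2}{2}\bigr)<m$, which is impossible.
\end{enumerate}
It remains to identify $u^*$ as the unique zero of $\cE'$. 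At an interior minimizer of a Fr\'echet differentiable functional the derivative vanishes: for every $h$, the function $t\mapsto\cE(u^*+th)$ has a minimum at $t=0$, so $\ip{\cE'(u^*)}{h}=0$, hence $\cE'(u^*)=0$. Conversely, suppose $\cE'(x)=0$. Lemma~\ref{lemmabus}(a) with $y=x$ gives, for every $z$,
\[
\cE(z)\ge\cE(x)+\tfrac{\alpha}{s}\norm{z-x}^s\ \ge\ \cE(x),
\]
so $x$ is a minimizer and therefore $x=u^*$. Alternatively, Assumption (B) applied to $x$ and $u^*$ gives $0\ge\alpha\norm{x-u^*}^s$ directly.

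\textbf{Part (b).} Fix $u$ and minimize $\phi(v):=\cE(u+v)$ over $v\in\cD$. Note that $\cD$ is nonempty since $0\in\cD$ (it is a cone), so $\inf_{\cD}\phi\le\cE(u)<\infty$. By Lemma~\ref{lemmabus}(c), $\phi$ is bounded below and coercive in $v$, because $\norm{u+v}\to\infty$ as $\norm{v}\to\infty$. Take a minimizing sequence $(v_n)\subset\cD$.
\begin{enumerate}
\item[(i)] $(v_n)$ is bounded by coercivity.
\item[(ii)] By reflexivity, a subsequence converges weakly to some $z\in\X$.
\item[(iii)] Since $\cD$ is weakly closed (by definition of a radial dictionary), $z\in\cD$.
\item[(iv)] Since $u+v_n\rightharpoonup u+z$, Lemma~\ref{lemmabus}(d) gives $\phi(z)\le\liminf\phi(v_n)=\inf_{\cD}\phi$.
\end{enumerate}
Hence $z\in\boldsymbol{\nabla}_u(\mathcal E;\cD)$, so this set is nonempty.

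The only delicate point is step (iii) of part (b). Weak closedness is the right hypothesis, but it must be combined with \emph{sequential} extraction. A weakly closed set is weakly sequentially closed, so the weak limit of the subsequence stays in $\cD$; this is exactly what the argument needs.
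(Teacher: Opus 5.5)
Your proof is correct and follows essentially the paper's own route. The paper gets both parts from Lemma~\ref{lemmabus} (coercivity, weak sequential lower semicontinuity, strict convexity) together with \cite[Theorems 2, 3]{FN2012}; those cited results are exactly the direct-method existence on weakly closed sets and the characterization of the minimizer by $\cE'(u^*)=0$ that you write out. Your tacit assumption that $\cD\neq\emptyset$ is also shared by the paper, since the empty set is vacuously a cone, and it is harmless.
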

\begin{proof}
(a) This follows from Lemma~\ref{lemmabus} and \cite[Theorems 2, 3]{FN2012}. \\
(b) By Lemma~\ref{lemmabus}, $\cE$ is weakly sequentially lower semicontinuous and coercive. Hence, for each $u\in \X$, so is the functional $\cE_u$ given by $\cE_u(v):=\cE(u+v)$. Thus, by \cite[Theorem 2]{FN2012}, the minimum in \eqref{mingreedy} is attained. 
\end{proof}
From now on $u^*$ will always denote the unique solution of \eqref{main}. We say that
$(u_m)_{m\in\NN_0}$ is a \emph{greedy sequence for $u^*$ by an $\mathcal E$-dictionary
optimization over $\cD$} if $u_0=0$ and, for every $m\ge 1$, the increment
\[
z_m := u_m-u_{m-1}
\]
belongs to $\boldsymbol{\nabla}_{u_{m-1}}(\mathcal E;\cD)$, equivalently,
\begin{equation}\label{eq:greedy_def}
\mathcal E(u_m)
=\mathcal E(u_{m-1}+z_m)
=\min_{v\in\cD}\mathcal E(u_{m-1}+v),
\qquad m\ge 1.
\end{equation}

\begin{remark}\label{remarkconvergence}\rm Note that a sequence $\left(\cE(u_m)\right)_{m\in \NN}\subset \RR$ 
generated by the greedy algorithm is, by construction, non-increasing. Since it is bounded below by $\cE(u^*),$ it converges. 
\end{remark}

The next Lemma establishes some relations between $p$ and $s$, under Assumptions {\rm(A)} (or (A+)) and {\rm(B)}. We will use the following notation: for $r>0$, $B_r=B(0,r)$ denotes the closed ball of radius $r$ centered at $0$.

\begin{lemma}\label{lemmas=2}
Let $\mathcal E$ be a Fr\'echet differentiable functional on a Banach 
space $\X$. Then:
\begin{enumerate}
\item[(a)] If Assumptions {\rm(A)} and {\rm(B)} hold, then $s\ge p+1$.
\item[(b)] If Assumptions {\rm(A+)} and {\rm(B)} hold, then $s=p+1$.
\end{enumerate}
\end{lemma}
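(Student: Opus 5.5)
Both parts will follow from testing (A)/(A+) against (B) along a single line. Fix $y\in\X$ and a unit vector $e\in S_{\X}$, and put $x=y+te$ with $t>0$. Assumption (B) combined with the duality pairing and the Lipschitz-type bound gives
\[
\alpha t^{s} \le \ip{\cE'(y+te)-\cE'(y)}{te} \le \dnorm{\cE'(y+te)-\cE'(y)}\, t \le \cL_K\, t^{p}\, t = \cL_K\, t^{p+1},
\]
provided $y$ and $y+te$ lie in a bounded set $K$ on which (A) holds with constant $\cL_K$. I will take $y=0$ and $K=B_1$, so that $t\in(0,1]$ is admissible.

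For (a), dividing gives $\alpha t^{s-p-1}\le \cL_{B_1}$ for all $t\in(0,1]$. If $s<p+1$, the left-hand side tends to $+\infty$ as $t\to 0^+$, which is a contradiction. Hence $s\ge p+1$. The only point to check is that the whole segment stays inside $B_1$, which it does.

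For (b), the global constant $\cL$ in (A+) lets $t$ range over all of $(0,\infty)$. We obtain $\alpha t^{s-p-1}\le \cL$ for every $t>0$. Letting $t\to\infty$ rules out $s>p+1$, and letting $t\to0^+$ rules out $s<p+1$; alternatively, (A+) implies (A), so part (a) already gives $s\ge p+1$. Therefore $s=p+1$.

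There is no real obstacle. The one subtlety is that $\X$ must be nontrivial, so that a unit vector $e$ exists; for $\X=\{0\}$ the assumptions are vacuous and the exponents are unconstrained. This degenerate case will be tacitly excluded.
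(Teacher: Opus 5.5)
Your proposal is correct and follows essentially the same argument as the paper: you test (B) against the $p$-Lipschitz bound along a segment to get $\alpha t^{s-(p+1)}\le \cL_K$ (resp.\ $\le \cL$), then let $t\to 0^+$, and also $t\to\infty$ in the global case. The only differences are cosmetic: you base the segment at $0$ with $K=B_1$ rather than at an arbitrary $x$ with $K=B(x,1)$, and your remark excluding the trivial space $\X=\{0\}$ addresses a degenerate case the paper also tacitly ignores.
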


\begin{proof}
(a) Fix $x\in\X$, let $K$ be the closed unit ball $B(x,1)$, choose $y\in S_{\X}$, and let $0<t<1$. Ellipticity {\rm(B)} gives
\[
\alpha t^{s}
\;\le\;
\langle ty,\,\mathcal E'(x+ty)-\mathcal E'(x)\rangle.
\]
Using the $p$-Lipschitz estimate on $K$ from {\rm(A)} yields
\[
\alpha t^{s}
\;\le\;
t\,\|\mathcal E'(x+ty)-\mathcal E'(x)\|_*
\;\le\;
\mathcal L_K\,t^{p+1},
\qquad 0<t<1.
\]
Hence
\[
t^{\,s-(p+1)} \le \frac{\mathcal L_K}{\alpha},
\qquad 0<t<1.
\]
Let $\beta := s-(p+1)$.  
If $\beta<0$, then $t^\beta\to+\infty$ as $t\to 0^+$, contradicting the 
boundedness above.  
Thus $\beta\ge 0$, i.e.\ $s\ge p+1$.

\smallskip
(b) Fix $x\in\X$, $y\in S_{\X}$, and $t>0$.  
Using {\rm(B)} and the global estimate {\rm(A+)} gives
\[
\alpha t^{s}
\;\le\;
t\,\|\mathcal E'(x+ty)-\mathcal E'(x)\|_*
\;\le\;
\mathcal L\,t^{p+1},
\qquad t>0.
\]
Equivalently,
\[
t^{\,s-(p+1)} \le \frac{\mathcal L}{\alpha},
\qquad t>0.
\]
Let $\beta := s-(p+1)$.  
If $\beta>0$, then $t^\beta\to+\infty$ as $t\to\infty$;  
if $\beta<0$, then $t^\beta\to+\infty$ as $t\to 0^+$.  
Thus boundedness on $(0,\infty)$ forces $\beta=0$, i.e.\ $s=p+1$.
\end{proof}

\begin{lemma}\label{LemmaLip1}
Let $\mathcal E$ be a Fr\'echet differentiable functional on a Banach 
space $\X$. Under Assumptions {\rm(B)} and either {\rm(A+)} or {\rm(A)}, the
Fr\'echet derivative $\cE'$ satisfies the following quantitative estimates:

\medskip\noindent
\textbf{(a) Global case (A+)+{\rm(B)}.}
For all $u,v\in X$,
\begin{align}
\alpha\|u-v\|^{p+1}
&\le 
\langle \cE'(u)-\cE'(v),\,u-v\rangle
\le 
\|\cE'(u)-\cE'(v)\|_*\,\|u-v\|
\notag\\
&\le 
\mathcal L\,\|u-v\|^{p+1}. 
\end{align}
Consequently,
\begin{align}
\alpha\|u-v\|^{p}
\;\le\;
\|\cE'(u)-\cE'(v)\|_*
\;\le\;
\mathcal L\,\|u-v\|^{p},
\qquad \forall u,v\in X.
\end{align}
In particular, since $\cE'(u^*)=0$,
\begin{align}
\alpha\|u-u^*\|^{p}
\;\le\;
\|\cE'(u)\|
\;\le\;
\mathcal L\,\|u-u^*\|^{p},
\qquad \forall u\in X.
\label{Lipandmin-global}
\end{align}

\medskip\noindent
\textbf{(b) Bounded case {\rm(A)}+{\rm(B)}.}
For any $u, v\in \X$, 
\begin{align}
\alpha\|u-v\|^{s}
&\le 
\langle \cE'(u)-\cE'(v),\,u-v\rangle
\le 
\|\cE'(u)-\cE'(v)\|_*\,\|u-v\|
\notag\\
&\le 
\mathcal L_{B_{\max\{\|u\|,\|v\|\}}}\,\|u-v\|^{p+1}.
\end{align}
Hence,
\begin{align}
\alpha\|u-v\|^{s-1}
\;\le\;
\|\cE'(u)-\cE'(v)\|_*
\;\le\;
\mathcal L_{B_{\max\{\|u\|,\|v\|\}}}\,\|u-v\|^{p},
\qquad
u, v\in \X. \label{biliponbounded}
\end{align}
In particular, since $\cE'(u^*)=0$,
\begin{align}
\alpha\|u-u^*\|^{s-1}
\;\le\;
\|\cE'(u)\|
\;\le\;
\mathcal L_{B_{\|u\|+\|u^*\|}}\,\|u-u^*\|^{p},
\qquad u\in \X. 
\label{Lipandmin-local}
\end{align}
Also, the right-hand side of \eqref{biliponbounded} and the triangle inequality give 
\begin{align*}
&\|\cE'(u)\|_*\le \|\cE'(0)\|_*+r \cL_{B_r},\qquad u\in B_r.
\end{align*}
\end{lemma}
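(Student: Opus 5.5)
The argument is a chain of elementary inequalities: combine ellipticity {\rm(B)} with Cauchy--Schwarz-type duality and the Lipschitz hypothesis, then divide by $\|u-v\|$. I treat the two cases in parallel.

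\textbf{Case (a), (A+)+(B).} By Lemma~\ref{lemmas=2}(b) we have $s=p+1$, so {\rm(B)} reads $\alpha\|u-v\|^{p+1}\le\langle\cE'(u)-\cE'(v),u-v\rangle$. The middle inequality is the duality bound $|\langle f,x\rangle|\le\|f\|_*\|x\|$. The last inequality follows from (A+): $\|\cE'(u)-\cE'(v)\|_*\le\cL\|u-v\|^p$.

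If $u\ne v$, dividing the whole chain by $\|u-v\|$ gives the two-sided bound $\alpha\|u-v\|^p\le\|\cE'(u)-\cE'(v)\|_*\le\cL\|u-v\|^p$. If $u=v$ the bound is trivial. For the particular case, Lemma~\ref{lemmawelldefined}(a) gives $\cE'(u^*)=0$; substituting $v=u^*$ yields \eqref{Lipandmin-global}, where the norm of $\cE'(u)$ is understood as the dual norm.

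\textbf{Case (b), (A)+(B).} The argument is identical except for two points. First, the lower bound uses the exponent $s$ from {\rm(B)} directly. Second, the upper bound applies (A) on the bounded set $K=B_{\max\{\|u\|,\|v\|\}}$, which contains both $u$ and $v$. Dividing by $\|u-v\|$ gives \eqref{biliponbounded}.

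For \eqref{Lipandmin-local}, put $v=u^*$. The only issue is the choice of ball. Since $\max\{\|u\|,\|u^*\|\}\le\|u\|+\|u^*\|$, both points lie in $B_{\|u\|+\|u^*\|}$. A Lipschitz constant valid on a larger set is also valid on a smaller one, so $\cL_{B_{\|u\|+\|u^*\|}}$ works.

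\textbf{Final estimate.} For $u\in B_r$, apply the right-hand side of \eqref{biliponbounded} with $v=0$; both $u$ and $0$ lie in $B_r$. This gives $\|\cE'(u)-\cE'(0)\|_*\le\cL_{B_r}\|u\|^p\le\cL_{B_r}r^p$. The triangle inequality then yields $\|\cE'(u)\|_*\le\|\cE'(0)\|_*+\cL_{B_r}r^p$.

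The statement writes $r\,\cL_{B_r}$ in place of $r^p\cL_{B_r}$. This agrees with $r^p\cL_{B_r}$ when $p=1$ or $r=1$, and it dominates $r^p\cL_{B_r}$ when $r\ge1$. For $r<1$ and $p<1$ the two can disagree, so here I would prove the bound with $\cL_{B_r}r^{p}$ and read the stated form as that bound.

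The expected main obstacle is exactly this bookkeeping: matching the exponent of $r$ in the last estimate, and checking that the balls in (A) are taken large enough to contain both points being compared. Everything else is immediate.
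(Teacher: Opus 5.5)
Your proposal is correct and follows the paper's proof: combine (B) with the duality bound and (A+) or (A), divide by $\|u-v\|$, then specialize to $v=u^*$ and to $v=0$ with the triangle inequality. Two additions go beyond the paper's one-line proof: you cite Lemma~\ref{lemmas=2}(b) for $s=p+1$ in case (a), which the paper leaves implicit, and you correctly note that the argument actually gives $\|\cE'(0)\|_*+r^p\cL_{B_r}$, which implies the stated $r\cL_{B_r}$ form only when $p=1$ or $r\ge 1$ — harmless, since the paper uses this estimate only to conclude that $\cE'$ is bounded on bounded sets.
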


\begin{proof}
The inequalities follow directly from ellipticity {\rm(B)} combined with the
global $p$-Lipschitz estimate in {\rm(A+)} or the local estimate in {\rm(A)}.
Dividing by $\|u-v\|$ where appropriate yields the stated bounds.
Setting $v=u^*$ and using $\cE'(u^*)=0$ gives \eqref{Lipandmin-local}, whereas the final estimate follows by taking $v=0$ and applying the triangle inequality. 
\end{proof}

Our next task is to give conditions under which the greedy algorithm converges to the minimum that solves \eqref{main}. 
To this end, we adapt and extend some results from \cite{BF2025} to the weaker hypotheses of our context.

\begin{lemma}\label{lemmaA1b}Let $\X$ be a Banach space, let $\cE$ be a Frechét differentiable functional for which Assumption ${\rm(B)}$ holds and $\cE'$ is continuous, and let $\cD$ be a radial dictionary. Then for each greedy sequence $(u_m)_{m\in \NN_0}$ of $u^*$ by a $\mathcal E$-dictionary optimization over $\cD$ the following holds: 
\begin{enumerate}
\item[(a)] For each $m\in \NN_0$, $\langle \cE'(u_{m+1}), u_{m+1}-u_m\rangle=0$. 
\item[(b)] For each $l\in \NN_0$, 
\begin{align}
\sum_{m=l}^{\infty}\|u_{m+1}-u_m\|^{s}\le \frac{s}{\alpha}\left(\cE(l)-\lim_{m\to \infty}\cE(u_m)\right). \label{sums0}
\end{align}
\item[(c)] \label{constant0}If there is $m\in \NN_0$ such that $\cE(u_m)=\cE(u_{m+1})$, then $u_m=u_n=\hat{u}$ for all $n\ge m$. In particular, either $\left(\cE(u_k)\right)_{k\in \NN_0}$ is strictly decreasing, or it is eventually constant. 

\item [(d)] The sequences $(u_m)_{m\in \NN}$ and $\left(\cE'(u_m)\right)_{m\in\NN}$ are bounded. 
\end{enumerate}
\end{lemma}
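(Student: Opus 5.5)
We prove the four items in order; each uses the previous ones together with Lemma~\ref{lemmabus}.

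\textbf{(a)} Set $z_{m+1}:=u_{m+1}-u_m\in\cD$. Since $\cD$ is a balanced cone, $t z_{m+1}\in\cD$ for every $t\in\RR$. By the minimality in \eqref{eq:greedy_def}, the function $\phi(t):=\cE(u_m+t z_{m+1})$ satisfies $\phi(1)\le\phi(t)$ for all $t\in\RR$. Because $\cE$ is Fr\'echet differentiable, $\phi$ is differentiable, so $\phi'(1)=\langle\cE'(u_{m+1}),z_{m+1}\rangle=0$.

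\textbf{(b)} Apply Lemma~\ref{lemmabus}(a) with $x=u_m$ and $y=u_{m+1}$. Using (a), this gives
\[
\cE(u_m)-\cE(u_{m+1})\ge \langle\cE'(u_{m+1}),u_m-u_{m+1}\rangle+\frac{\alpha}{s}\|u_{m+1}-u_m\|^s=\frac{\alpha}{s}\|u_{m+1}-u_m\|^s .
\]
Summing from $m=l$ to $N$ telescopes the left-hand side to $\cE(u_l)-\cE(u_{N+1})$. Letting $N\to\infty$ and using the convergence in Remark~\ref{remarkconvergence} yields \eqref{sums0}. The term $\cE(l)$ in the statement should be read as $\cE(u_l)$.

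\textbf{(c)} If $\cE(u_m)=\cE(u_{m+1})$, the inequality from (b) forces $u_{m+1}=u_m$. Then $u_{m+2}$ is defined by the same minimization problem as $u_{m+1}$. To show it equals $u_m$, note that $\cE(u_m)\le\cE(u_m+v)$ for all $v\in\cD$, because $u_{m+1}=u_m$ attains the minimum. Hence $z=0$ is a minimizer at the point $u_m$. If $z'$ is any other minimizer, then $\cE(u_m+z')=\cE(u_m)$, and the inequality in (b) applied to this step gives $\|z'\|^s\le0$, so $z'=0$. Therefore the minimizer is unique, $u_{m+2}=u_m$, and induction gives $u_n=u_m=:\hat u$ for all $n\ge m$. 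The dichotomy follows at once: either no consecutive values of $\cE(u_k)$ are equal, so the sequence is strictly decreasing, or from the first equality on it is constant.

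\textbf{(d)} Since $\cE(u_m)\le\cE(u_0)=\cE(0)$, every $u_m$ lies in the sublevel set $\{x:\cE(x)\le\cE(0)\}$. This set is bounded by coercivity, Lemma~\ref{lemmabus}(c). A quantitative bound also follows from Lemma~\ref{lemmabus}(a) with $y=0$: $\frac{\alpha}{s}\|x\|^s\le \cE(0)-\cE(0)+\|\cE'(0)\|_*\|x\|$, and since $s>1$ this bounds $\|x\|$.

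Boundedness of $(\cE'(u_m))$ is the main obstacle. Continuity of $\cE'$ alone does not give boundedness on bounded sets in infinite dimensions. The plan is to use Assumption~{\rm(A)} when it is available: with $r=\sup_m\|u_m\|$, the last estimate of Lemma~\ref{LemmaLip1} gives $\|\cE'(u_m)\|_*\le\|\cE'(0)\|_*+r\,\cL_{B_r}$, where this factor should be read as $r^p\cL_{B_r}$. Without {\rm(A)}, the fallback is the monotonicity in {\rm(B)}. The derivative $\cE'$ is a monotone operator defined on all of the reflexive space $\X$, so it is locally bounded at every point. One would then need to upgrade local boundedness to boundedness on the particular bounded set $\{u_m\}$. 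We expect to rely on {\rm(A)} here, since the paper's standing assumptions include it.
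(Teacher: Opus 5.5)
Your proposal is correct and follows the paper's proof essentially step for step. The only variation is in (c): you read off $u_{m+1}=u_m$ directly from the per-step decrease inequality $\cE(u_m)-\cE(u_{m+1})\ge\frac{\alpha}{s}\|u_{m+1}-u_m\|^s$, whereas the paper uses $g_m'(0)=0$ together with (a) and Assumption~{\rm(B)}, which is the same computation; your caveat in (d) is also well founded, since the paper bounds $(\cE'(u_m))$ only through Lemma~\ref{LemmaLip1}, which needs Assumption~{\rm(A)} even though (A) is not among the lemma's stated hypotheses, and your correction of $r\cL_{B_r}$ to $r^p\cL_{B_r}$ is right.
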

\begin{proof}
The convergence of $(\cE(u_m))_{m\in \NN}$ follows from Remark~\ref{remarkconvergence}. The proofs of (a) and (b) from \cite[Lemma A.1.(a) and (b)]{BF2025} are valid in our context, but we give a brief proof for  convenience: let $g_m:\RR\rightarrow \X$ be given by $g_m(t):=\cE(u_m+t(u_{m+1}-u_m))$. As $u_{m+1}-u_m\in \cD$, $g_m$ reaches a minimum at $t=1$. Thus, 
\begin{align*}
0=&g_m'(1)=\langle \cE'(u_{m+1}), u_{m+1}-u_m\rangle, 
\end{align*}
so we obtained (a). Now from the above and Lemma~\ref{lemmabus} we get 
\begin{align*}
\sum_{m=l}^{n}\|u_{m+1}-u_m\|^{s}\le& \frac{s}{\alpha}\sum_{m=l}^{n}(\cE(u_m)-\cE(u_{m+1}))=\frac{s}{\alpha}(\cE(l)-\cE(u_{n+1})), 
\end{align*}
and the result follows by taking limit. \\
(c) Let $g_m$ be as in the proof of (a). Then $g_m$ also has a minimum at $t=0$, thus 
\begin{align*}
0=&g_m'(0)=\langle \cE'(u_{m}), u_{m+1}-u_m\rangle. 
\end{align*}
From this, (a) and Assumption {\rm(B)} it follows that
\begin{align*}
\alpha\|u_m-u_{m+1}\|^s\le& \langle \cE'(u_m)-\cE'(u_{m+1}), u_m-u_{m+1}\rangle=0,
\end{align*}
so $u_m=u_{m+1}$. It follows that $u_{m+2}-u_{m}=u_{m+2}-u_{m+1}\in \cD$. By the definition of the minimizer \eqref{mingreedy}, it follows that $\cE(u_{m+2})=\cE(u_{m+1})=\cE(u_m)$, so repeating the previous argument we get $u_{m+2}=u_{m+1}=u_{m}$. By an inductive argument we obtain the desired result. \\
(d) The boundedness of $(u_m)_{m\in \NN}$ follows at once from Remark~\ref{remarkconvergence} and Lemma 2.2. Since $\cE'$ is bounded on bounded sets by Lemma~\ref{LemmaLip1}, we are done. \end{proof}
To prove the convergence of the algorithm to the minimum, we also need a standard higher-order estimate. 

\begin{lemma}\label{lemmaestimate}Let $\X$ be a Banach space, and let $\cE$ be a Frechèt differentiable functional on $\X$ for which Assumption {\rm(A)} holds. Let $r_0>0$. For every $x\in B_{r_0}$, every $y\in S_{\X}$ and every $|t|\le  r_0-\|x\|$,
\begin{align}
\cE\left(x+ty\right)\le& \cE(x)+t \langle \cE'(x), y \rangle+\frac{\cL_{B_{r_0}}}{p+1}\|t\|^{p+1}. \label{higherorder1}
\end{align}
\end{lemma}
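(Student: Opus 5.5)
The plan is to use the fundamental theorem of calculus along the segment from $x$ to $x+ty$, and then apply the local $p$-Lipschitz bound from Assumption {\rm(A)} on the ball $B_{r_0}$.

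First I check that the whole segment stays in that ball. For $\tau\in[0,1]$ (or $\tau$ between $0$ and $1$ along the parametrisation), the triangle inequality gives
\[
\norm{x+\tau t y}\le \norm{x}+|\tau t|\le \norm{x}+|t|\le r_0 ,
\]
since $|t|\le r_0-\norm{x}$. So every point $x+\tau t y$ lies in $B_{r_0}$, and Assumption {\rm(A)} with $K=B_{r_0}$ applies to each pair $x+\tau t y$, $x$.

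Next, set $\varphi(\tau):=\cE(x+\tau t y)$. Because $\cE$ is Fr\'echet differentiable, $\varphi$ is differentiable with $\varphi'(\tau)=t\langle \cE'(x+\tau ty),y\rangle$. This derivative is continuous in $\tau$, because (A) makes $\cE'$ continuous on the ball. Hence
\[
\cE(x+ty)-\cE(x)=\int_0^1 t\langle \cE'(x+\tau ty),y\rangle\,d\tau .
\]
I then add and subtract $\cE'(x)$ inside the integral. This splits the right-hand side into $t\langle\cE'(x),y\rangle$ plus the remainder
\[
\int_0^1 t\langle \cE'(x+\tau ty)-\cE'(x),y\rangle\,d\tau .
\]
Since $\norm{y}=1$, the remainder is bounded in absolute value by
\[
|t|\int_0^1 \dnorm{\cE'(x+\tau ty)-\cE'(x)}\,d\tau\le |t|\int_0^1 \cL_{B_{r_0}}\,\tau^p|t|^p\,d\tau=\frac{\cL_{B_{r_0}}}{p+1}|t|^{p+1}.
\]
This is \eqref{higherorder1}, where the paper's $\|t\|$ is read as $|t|$.

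The only step that needs any care is justifying the integral formula when we only know Fr\'echet differentiability. The integrand is continuous, so the ordinary Riemann FTC applies. Alternatively, the mean value theorem gives the integrand at a single point $\tau_0$, and the resulting bound $\cL_{B_{r_0}}|t|^{p+1}$ is too weak because it loses the factor $1/(p+1)$. So I would use the integral form rather than the mean value theorem. The argument needs no reflexivity and no ellipticity.
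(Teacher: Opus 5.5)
Your proof is correct and follows essentially the same route as the paper: the fundamental theorem of calculus along the segment from $x$ to $x+ty$, adding and subtracting $\cE'(x)$, and bounding the remainder with the $p$-Lipschitz estimate from (A) on $B_{r_0}$. The only difference is cosmetic: you parametrize over $[0,1]$ and handle both signs of $t$ and boundary points in one pass, while the paper first reduces to $t>0$ and interior points by symmetry ($y\mapsto -y$) and continuity.
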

\begin{proof}
By continuity of both $\cE$ and $\cE'$, it is enough to consider the case $0<\|x\|<r_0$ and $0<t<r_0-\|x\|$. Also, replacing $y$ by $-y$ if required, it is enough to consider the case $t>0$. \\
Now we use the standard argument: given $x$ as above and $y\in S_{\X}$, choose $0<r<r_0-\|x\|$. Define $f:[0,r]\rightarrow \RR$ by 
\begin{align*}
f(t):=&\cE\left(x+ty\right). 
\end{align*}
Note that for $0<a<r$ we have 
\begin{align*}
f'(a)=&\langle \cE'\left(x+ay\right), y\rangle,
\end{align*}
and $f'$ is continuous on $[0,r]$. Now choose $0<t<r$. We have 
\begin{align*}
\cE\left(x+ty\right)-\cE\left(x\right)=&\int_{0}^{t}f'(a)da=f'(0)t +\int_{0}^{t}(f'(a)-f'(0))da \\
\le& f'(0)t+\int_{0}^{t}\|\cE'(x+ay)-\cE'(x)\| da\\
\le& \langle\cE'\left(x\right), ty\rangle+\cL_{B_{r_0}}\int_{0}^{t}a^p da=\langle \cE'\left(x\right), ty\rangle+\frac{\cL_{B_{r_0}}}{p+1} t^{p+1}. 
\end{align*}
\end{proof}

\section{Approximating Solutions for Nonlinear Convex Problems over Balanced Cones}

To establish some results involving the rate of convergence, we will need further assumptions on the dictionary. But first, we give an estimate for the descent step in terms of the function $\sigma$ (see \eqref{sigma}).

\begin{proposition}[Dictionary-restricted one-step upper bound]
\label{prop:dict-upper}
Let $(\X,\|\cdot\|)$ be a Banach space, $\cD\subset \X$ a radial dictionary, and
$\cE:\X\to\RR$ a Fr\'echet differentiable functional.
Assume that \textup{(A)} holds. For $r>0$, set
\[
M_r := 1+\sup_{x\in B_r}\|\cE'(x)\|_{\X^\ast}.
\]
Then the following hold.

\begin{enumerate}
\item[(a)] For every $x\in B_r$,
\[
\min_{z\in\cD}\cE(x+z)
\;\le\;
\cE(x)\;-\;\beta_r\,\big(\sigma(x)\big)^{1+\frac1p},
\]
where
\[
\sigma(x):=\sup_{w\in \cD\cap S_{\X}} \big|\langle \cE'(x),w\rangle\big|,
\qquad
\beta_r
:=
\frac{p}{p+1}\,
\min\left\{
\frac{r}{M_r^{1/p}},
\;\frac{1}{\cL_{B_{2r}}^{1/p}}
\right\}.
\]

\item[(b)] If \textup{(A+)} holds, then the conclusion of (a) holds for all $x\in\X$
(with no restriction to $B_r$) provided that $\beta_r$ is replaced by
\[
\beta := \frac{p}{(p+1)\,\cL^{1/p}}.
\]
\end{enumerate}
\end{proposition}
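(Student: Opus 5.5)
The plan is to test the minimum over $\cD$ with a single well-chosen element $z=tw$, where $w\in\cK_0=\cD\cap S_{\X}$ nearly attains $\sigma(x)$ and $t\ge 0$ is a step length. We then apply the higher-order estimate of Lemma~\ref{lemmaestimate} with $r_0=2r$ and optimize over $t$.

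\textbf{Choice of direction.} If $\sigma(x)=0$ there is nothing to prove, since $0\in\cD$. Otherwise fix $0<\varepsilon<\sigma(x)$ and choose $w\in\cK_0$ with $|\langle\cE'(x),w\rangle|\ge\sigma(x)-\varepsilon=:\sigma_\varepsilon$. Because $\cD$ is balanced, we may replace $w$ by $-w$ and assume $\langle\cE'(x),w\rangle=-|\langle\cE'(x),w\rangle|\le-\sigma_\varepsilon$. Because $\cD$ is a cone, $tw\in\cD$ for every $t\ge0$. For $x\in B_r$ and $0\le t\le r\le 2r-\|x\|$, Lemma~\ref{lemmaestimate} with $r_0=2r$ gives
\[
\min_{z\in\cD}\cE(x+z)\le \cE(x+tw)\le \cE(x)-t\sigma_\varepsilon+\frac{\cL_{B_{2r}}}{p+1}t^{p+1}=:\cE(x)-\phi(t).
\]

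\textbf{Optimizing the step.} The unconstrained maximizer of $\phi$ is $t^*=(\sigma_\varepsilon/\cL_{B_{2r}})^{1/p}$, at which
\[
\phi(t^*)=\frac{p}{p+1}\,\frac{\sigma_\varepsilon^{1+1/p}}{\cL_{B_{2r}}^{1/p}}.
\]
There are two cases.
\begin{itemize}
\item If $t^*\le r$, this is admissible and already yields the second term in the minimum defining $\beta_r$.
\item If $t^*>r$, we take $t=r$. Then $\cL_{B_{2r}}r^p<\sigma_\varepsilon$, so $\phi(r)\ge r\sigma_\varepsilon-\frac{r\sigma_\varepsilon}{p+1}=\frac{p}{p+1}r\sigma_\varepsilon$. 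Since $\sigma_\varepsilon\le\sigma(x)\le\|\cE'(x)\|_*<M_r$ (the ``$1+$'' in $M_r$ guarantees $M_r>0$ and strict inequality), we have $(\sigma_\varepsilon/M_r)^{1/p}\le1$. Hence $r\sigma_\varepsilon\ge r\,\sigma_\varepsilon^{1+1/p}/M_r^{1/p}$, which is the first term of $\beta_r$.
\end{itemize}
In both cases $\min_{z\in\cD}\cE(x+z)\le\cE(x)-\beta_r\sigma_\varepsilon^{1+1/p}$. Letting $\varepsilon\to0$ gives (a).

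\textbf{Part (b).} Under (A+), the argument of Lemma~\ref{lemmaestimate} runs verbatim with the global constant $\cL$ and with no restriction on $|t|$. The estimate $\cE(x+ty)\le\cE(x)+t\langle\cE'(x),y\rangle+\frac{\cL}{p+1}|t|^{p+1}$ therefore holds for all $x\in\X$, $y\in S_{\X}$ and $t\in\RR$. Thus $t^*=(\sigma_\varepsilon/\cL)^{1/p}$ is always admissible, which directly gives $\beta=\frac{p}{(p+1)\cL^{1/p}}$.

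\textbf{Main obstacle.} The only delicate point is the truncated case $t^*>r$, where the bound must still be expressed through $\sigma^{1+1/p}$. This is exactly where $M_r$ enters, via the bound $\sigma(x)\le M_r$. A second point to check is that $\sigma(x)$ need not be attained, which the $\varepsilon$-approximation handles.
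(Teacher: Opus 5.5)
Your proof is correct and follows essentially the same route as the paper. You test the minimum with $z=tw$ for a near-maximizing $w\in\cK_0$ (sign fixed by balancedness), apply Lemma~\ref{lemmaestimate} on $B_{2r}$, and split according to whether the unconstrained optimal step $(\sigma_\varepsilon/\cL_{B_{2r}})^{1/p}$ exceeds $r$, using $\sigma(x)<M_r$ in the truncated case. In (b), invoking the global version of the estimate directly is equivalent to the paper's device of enlarging $r$ until the unconstrained step becomes admissible.
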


\begin{proof}
(a) Fix $x\in B_r$. If $\sigma(x)=0$, there is nothing to prove. Otherwise, pick $$0<\varepsilon<\min\{\sigma(x), r\}.$$ By the definition of $\sigma$, there exists
$w_\varepsilon\in\cK_0$ such that
\[
\langle \cE'(x),w_\varepsilon\rangle \ge \sigma(x)-\varepsilon.
\]
Consider the descent step $z_\varepsilon:=-\lambda w_\varepsilon$ with $r\ge  \lambda>0$.
Since $\cD$ is a balanced cone and $w_\varepsilon\in\cK_0\subset\cD$, we have $z_\varepsilon\in\cD$. Applying Lemma~\ref{lemmaestimate} with $K=B_{2r}$ we obtain
\begin{align}
\cE(x+z_\varepsilon)\ & \le\ \cE(x)+\langle \cE'(x),z_\varepsilon\rangle+\frac{\mathcal{L}_{B_{2r}}}{p+1}\|z_\varepsilon\|^{p+1}\label{prueba?} \\ 
\;& =\;\cE(x)\;-\;\lambda\,\langle \cE'(x),w_\varepsilon\rangle\;+\;\frac{\mathcal{L}_{B_{2r}}}{p+1}\lambda^{p+1}.\nonumber
\end{align}
Using that $\langle \cE'(x),w_\varepsilon\rangle \ge \sigma(x)-\varepsilon$, we get
\[
\cE(x+z_\varepsilon)\ \le\ \cE(x)\;-\;\lambda\bigl(\sigma(x)-\varepsilon\bigr)\;+\;\frac{\cL_{B_{2r}}}{p+1}\lambda^{p+1}.
\]
As we are considering $0< \lambda \le r$, the right-hand side is minimized at 
\begin{align}
\lambda_{m,r, \varepsilon}=&\min\left\lbrace \left(\frac{\sigma(x)-\varepsilon}{\cL_{B_{2r}}}\right)^{\frac{1}{p}}, r  \right\rbrace. \label{min1}
\end{align}
If the minimum above is $\left(\frac{\sigma(x)-\varepsilon}{\cL_{B_{2r}}}\right)^{\frac{1}{p}}$, then 
\begin{align*}
\cE(x+z_\varepsilon)-\cE(x)\le& -\left(\frac{p}{p+1}\right)\left(\sigma(x)-\varepsilon\right)^{\frac{1}{p}+1} \cL_{B_{2r}}^{-\frac{1}{p}}.
\end{align*}
On the other hand, if the minimum in \eqref{min1} is $r$, then 
\begin{align*}
\cE(x+z_\varepsilon)-\cE(x)\le& -r(\sigma(x)-\varepsilon)+\frac{\cL_{B_{2r}}}{p+1} r^{p+1}\\
=&-\left(\sigma(x)-\varepsilon\right)^{\frac{1}{p}+1}\left(\frac{r }{\left(\sigma(x)-\varepsilon\right)^{\frac{1}{p}}}\left(1- \frac{\cL_{B_{2r}} r^{p}}{(p+1)\left(\sigma(x)-\varepsilon\right)}\right)\right)\\
\le& -\left(\frac{p}{p+1}\right)\left(\sigma(x)-\varepsilon\right)^{\frac{1}{p}+1}\left(\frac{r}{\left(\sigma(x)-\varepsilon\right)^{\frac{1}{p}}}\right). 
\end{align*}
Combining the above inequalities, it follows that 
\begin{align*}
\cE(x+z_\varepsilon)-\cE(x)\le&-\left(\frac{p}{p+1}\right)\left(\sigma(x)-\varepsilon\right)^{\frac{1}{p}+1}\min\left\lbrace \frac{r}{\left(\sigma(x)-\varepsilon\right)^{\frac{1}{p}}},\frac{1}{\cL_{B_{2r}}^{\frac{1}{p}}}. \right\rbrace. 
\end{align*}
Since $z_\varepsilon\in\cD$, from the above and letting $\varepsilon\downarrow 0$ we obtain 
\begin{align*}
\min_{z\in\cD}\cE(x+z)\le & \, \cE(x)-\left(\frac{p}{p+1}\right)\min\left\lbrace \frac{r}{\left(\sigma(x)\right)^{\frac{1}{p}}},\frac{1}{\cL_{B_{2r}}^{\frac{1}{p}}}\right\rbrace \left(\sigma(x)\right)^{\frac{1}{p}+1}\\
\le& \cE(x)-\left(\frac{p}{p+1}\right)\min\left\lbrace \frac{r}{M_r^{\frac{1}{p}}},\frac{1}{\cL_{B_{2r}}^{\frac{1}{p}}}\right\rbrace \left(\sigma(x)\right)^{\frac{1}{p}+1}
\end{align*}
(b)  Fix $x\in \X$. The inequality in the statement holds if $\sigma(x)=0$. Otherwise, we fix $r>\|x\|$ and proceed as in the proof of (a) until we obtain \eqref{min1}, but with $\cL_{B_{2r}}$ replaced by $\cL$, that is  
\begin{align*}
\lambda_{m,r, \varepsilon}=&\min\left\lbrace \left(\frac{\sigma(x)-\varepsilon}{\cL}\right)^{\frac{1}{p}}, r \right\rbrace.
\end{align*}
Now we keep $x$ and $\varepsilon$ fixed and increase $r$ until the minimum is $\left(\frac{\sigma(x)-\varepsilon}{\cL}\right)^{\frac{1}{p}}$. Then the computation of the proof of (a) gives
\begin{align*}
\inf_{z\in \cD} \cE(x+z)-\cE(x)\le &\cE(x+z_\varepsilon)-\cE(x)\le -\left(\frac{p}{p+1}\right)\left(\sigma(x)-\varepsilon\right)^{\frac{1}{p}+1} {\cL^{-\frac{1}{p}}}.
\end{align*}
Finally, we let $\epsilon\downarrow 0$ to complete the proof. 
\end{proof}

\begin{definition}[Norming set]
Let $(\X, \|\cdot\|)$ be a Banach space and let $\mathcal{K} \subset \X$ be a bounded set. We say that $\mathcal{K}$ is a \emph{norming set} for the dual space $\X^*$ if there exists a constant $C=C_{\mathcal{K}} > 0$ such that
\[
\|\phi\|_* \le C \sup_{z \in \mathcal{K}} |\langle \phi, z \rangle|, \qquad \text{for all } \phi \in \X^*.
\]
From now on, $C_{\mathcal{K}}$ will always denote this constant. If $C\ge C_{\cK}$, we will say that $\cK$ is $C$-norming. 
\end{definition}

\begin{remark}\label{remarknormingdense}\rm Note that if $\cK\subset \X$ is norming for $\X^*$, then $\spn(\cK)$ is dense in $\X$. Indeed, if there were $x\not\in \overline{\spn(\cK)}$, one could define a bounded linear functional $$x^*: \spn(\{x\})\oplus \overline{\spn(\cK)}\rightarrow \RR$$ by 
\begin{align*}
&x^*(tx+y):=t &&\forall t\in \RR\;\forall y\in \overline{\spn(\cK)}
\end{align*}
and then extend $x^*$ to a bounded linear functional on $\X$ by the Hahn-Banach Theorem. This would contradict the norming assumption. 
\end{remark}

Next, we prove the convergence of the algorithm and our estimates for the rate of convergence. 

\begin{proposition}
\label{prop:energy-vs-sigma-normingv2}
Let $\X$ be a reflexive Banach space, let $\mathcal{E}:\X\to\RR$ be Fr\'echet differentiable, and assume that $\mathcal{E}$ satisfies \textnormal{(A+)} and \textnormal{(B)}. Let $\cD\subset\X$ be a radial dictionary, and set
\[
\mathcal{K}:=\cD\cap S_\X .
\]
Assume that $\mathcal{K}$ is a norming set for $\X^*$, i.e., there exists $C_{\mathcal{K}}\ge 1$ such that
\[
\|f\|_{\X^*}\le C_{\mathcal{K}}\sup_{w\in \mathcal{K}} |\langle f,w\rangle|
\qquad \text{for all } f\in \X^*.
\]
Then there exists a constant $c=c(C_{\mathcal{K}})>0$ such that, for all $u\in \X$,
\[
\mathcal{E}(u)-\mathcal{E}(u^*)
\;\le\;
c\,\big(\sigma(u)\big)^{1+\frac1p},
\]
where
\[
\sigma(u):=\sup_{w\in \mathcal{K}} |\langle \mathcal{E}'(u),w\rangle|.
\]
\end{proposition}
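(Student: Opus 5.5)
The strategy is to bound the energy gap by the dual norm of the gradient, and then convert the dual norm into $\sigma(u)$ using the norming property. By Lemma~\ref{lemmas=2}(b), assumptions (A+) and (B) force $s=p+1$, so from now on $s=p+1$.

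\textbf{Step 1: energy gap versus distance.} Apply Lemma~\ref{lemmabus}(a) with $x=u^*$ and $y=u$:
\[
\cE(u^*)-\cE(u)\ge \langle \cE'(u),u^*-u\rangle+\frac{\alpha}{p+1}\|u-u^*\|^{p+1}.
\]
Rearranging gives
\[
\cE(u)-\cE(u^*)\le \|\cE'(u)\|_*\,\|u-u^*\|-\frac{\alpha}{p+1}\|u-u^*\|^{p+1}.
\]

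\textbf{Step 2: maximise over the distance.} Set $t=\|u-u^*\|$ and $g=\|\cE'(u)\|_*$. Maximising the right-hand side $gt-\frac{\alpha}{p+1}t^{p+1}$ over $t\ge0$ (Young's inequality with conjugate exponents $p+1$ and $(p+1)/p$) gives
\[
\cE(u)-\cE(u^*)\le \frac{p}{p+1}\,\alpha^{-1/p}\,\|\cE'(u)\|_*^{1+\frac1p}.
\]
Alternatively, one could use the cruder route via \eqref{Lipandmin-global}, namely $t\le (g/\alpha)^{1/p}$, which yields $gt\le \alpha^{-1/p}g^{1+1/p}$ and differs only in the constant.

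\textbf{Step 3: pass from the dual norm to $\sigma$.} Apply the norming hypothesis to $f=\cE'(u)$: it gives $\|\cE'(u)\|_*\le C_{\cK}\,\sigma(u)$. Substituting into Step 2 yields
\[
\cE(u)-\cE(u^*)\le \frac{p}{p+1}\,\alpha^{-1/p}\,C_{\cK}^{1+\frac1p}\,\sigma(u)^{1+\frac1p},
\]
so $c=\frac{p}{p+1}\alpha^{-1/p}C_{\cK}^{1+1/p}$. This constant also depends on $\alpha$ and $p$; the statement's notation $c(C_{\cK})$ should be read as suppressing these fixed parameters.

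\textbf{Main obstacle.} The only delicate point is that Lemma~\ref{lemmabus}(a) must be available with exponent $s=p+1$; this is exactly what Lemma~\ref{lemmas=2}(b) supplies. Lemma~\ref{lemmabus} also requires $\cE'$ to be continuous, which follows from (A+), and it guarantees existence of $u^*$ through Lemma~\ref{lemmawelldefined}. Beyond that, the argument is a direct chain of inequalities.
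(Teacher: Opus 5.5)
Your proof is correct, but it takes a different route from the paper's.

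\textbf{The paper's route.} The paper expands at $u^*$ with the H\"older upper estimate of Lemma~\ref{lemmaestimate}. Since $\cE'(u^*)=0$, this gives $\cE(u)-\cE(u^*)\le\frac{\cL}{p+1}\|u-u^*\|^{p+1}$. It then converts distance into gradient norm through \eqref{Lipandmin-global}, $\alpha\|u-u^*\|^{p}\le\|\cE'(u)\|_*$, and obtains $c=\frac{\cL}{(p+1)\alpha^{1+1/p}}C_{\cK}^{1+1/p}$.

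\textbf{Your route.} You expand at $u$ using only the ellipticity lower bound of Lemma~\ref{lemmabus}(a), and you maximize $gt-\frac{\alpha}{p+1}t^{p+1}$. This is a gradient-domination (Polyak--\L{}ojasiewicz-type) argument. Assumption (A+) enters only through Lemma~\ref{lemmas=2}(b), to fix $s=p+1$, and to make $\cE'$ continuous.

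\textbf{What your route buys.}
\begin{itemize}
\item Your constant $\frac{p}{p+1}\alpha^{-1/p}C_{\cK}^{1+1/p}$ does not involve $\cL$. It is never larger than the paper's, since the ratio of the two is $p\alpha/\cL\le 1$ by Lemma~\ref{LemmaLip1}(a).
\item The same computation with a general $s>1$ gives, for every $u\in\X$,
\[
\cE(u)-\cE(u^*)\le\frac{s-1}{s}\,\alpha^{-1/(s-1)}\,C_{\cK}^{s/(s-1)}\,\sigma(u)^{s/(s-1)},
\]
with no Lipschitz hypothesis at all.
\item In the setting of Proposition~\ref{prop:energy-vs-sigma-normingv3}, this bound is global, with a constant independent of the iterates. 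Its exponent $\frac{s}{s-1}$ is larger than $\frac{p+1}{s-1}$ when $s>p+1$, so it is stronger once $\sigma$ is small.
\item Inserted into the proof of Theorem~\ref{th:rate_convergencev3}(i), it would improve the rate from $O(m^{-p/(s-1-p)})$ to $O(m^{-sp/(s-1-p)})$.
\end{itemize}

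\textbf{What the paper's route buys.} It reuses the same upper Taylor estimate that drives the descent step in Proposition~\ref{prop:dict-upper}, so the two halves of the convergence argument rest on one tool. The price is a constant that depends on $\cL/\alpha$.
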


\begin{proof}
Fix $u\in \X$. From the norming condition we obtain 
\begin{align*}
&\sigma(u)\ge C_{\cK}^{-1}\|\cE'(u)\|.  
\end{align*}
Hence, using the standard upper estimate given by  Lemma~\ref{lemmaestimate} with $r_0$ as large as needed and considering that $\cL_{B_{r_0}}\le \cL$, we get 
\begin{align*}
\cE(u)-\cE(u^*)=&\cE(u^*+u-u^*)-\cE(u^*) \\
\le& \langle \cE'(u^*),u-u^*\rangle+\frac{\cL}{p+1}\|u-u^*\|^{p+1}\\
&\underset{\cE'(u^*)=0}{=}\frac{\cL}{p+1}\|u-u^*\|^{p+1}\underset{\eqref{Lipandmin-global}}{\le}\frac{\cL}{(p+1)\alpha^{1+\frac{1}{p}}}\|\cE'(u)\|_*^{1+\frac{1}{p}}\\
\le& \frac{\cL C_{\cK}^{1+\frac{1}{p}}}{(p+1)\alpha^{1+\frac{1}{p}}} (\sigma(u))^{1+\frac{1}{p}}.
\end{align*}
\end{proof}

\begin{theorem}
\label{th:rate_convergencev2}
Let $\X$ be a reflexive Banach space and let $\cD\subset \X$ be a radial dictionary such that
\[
\mathcal{K}:=\cD\cap S_\X
\]
is norming for $\X^*$. Assume that $\cE:\X\to\RR$ satisfies \textnormal{(A+)}. Then every greedy sequence
$(u_m)_{m\in\NN_0}$ associated with an $\cE$-dictionary optimization over $\cD$ (with minimizer $u^*$)
satisfies, for every fixed $k\in\NN$,
\[
\cE(u_m)-\cE(u^*) \;=\; O\!\left(m^{-k}\right).
\]
Moreover, there exist constants $C>0$ and $\alpha\in[0,1)$ such that
\[
\cE(u_m)-\cE(u^*) \;\le\; C\,\alpha^{m}
\qquad \text{for all } m\in\NN_0.
\]
\end{theorem}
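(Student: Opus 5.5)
Set $e_m:=\cE(u_m)-\cE(u^*)\ge 0$. The plan is to show that $e_m$ contracts geometrically; the polynomial rates $O(m^{-k})$ for every $k$ then follow at once, since $\alpha^m m^k\to 0$. The statement only lists (A+), but the minimizer $u^*$ and the greedy sequence require (B), as in Proposition~\ref{prop:energy-vs-sigma-normingv2}, so I take (A+) and (B) together. By Lemma~\ref{lemmas=2}(b) this forces $s=p+1$, which is the critical regime.

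\textbf{Step 1 (descent).} Apply Proposition~\ref{prop:dict-upper}(b) at $x=u_{m}$. Since $u_{m+1}$ minimizes $\cE(u_m+z)$ over $z\in\cD$,
\[
\cE(u_{m+1})\le \cE(u_m)-\beta\,\sigma(u_m)^{1+\frac1p},\qquad \beta=\frac{p}{(p+1)\cL^{1/p}}.
\]

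\textbf{Step 2 (lower bound for $\sigma$).} Proposition~\ref{prop:energy-vs-sigma-normingv2} gives $e_m\le c\,\sigma(u_m)^{1+\frac1p}$ with
\[
c=\frac{\cL\, C_{\cK}^{1+1/p}}{(p+1)\alpha^{1+1/p}}.
\]
Substituting into Step 1 yields
\[
e_{m+1}\le\Bigl(1-\frac{\beta}{c}\Bigr)e_m .
\]
Set $q:=1-\beta/c$. Then $q<1$, and $q\ge 0$ automatically because $e_{m+1}\ge 0$. Concretely,
\[
\frac{\beta}{c}=\frac{p\,\alpha^{1+1/p}}{\cL^{1+1/p}C_{\cK}^{1+1/p}}\le p\le 1,
\]
which is consistent with $\alpha\le\cL$ from Lemma~\ref{LemmaLip1}(a) and $C_{\cK}\ge1$. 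If $e_m=0$ at some step, the sequence is constant from then on by Lemma~\ref{lemmaA1b}(c), and the bound holds trivially.

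\textbf{Step 3 (iterate).} Iterating Step 2 gives $e_m\le q^m e_0=q^m(\cE(0)-\cE(u^*))$. This is the exponential statement with $C=e_0$ and the theorem's rate constant $\alpha:=q$. Note that this $\alpha$ clashes with the ellipticity constant in (B), so in the write-up I will rename it. The $O(m^{-k})$ claim is immediate from the exponential bound.

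\textbf{Main obstacle.} The only real issue is bookkeeping: the two propositions must be stated with the same $\sigma$, with the norming constant used in the right direction, and with the exponents matching. The key structural fact is that under (A+) both bounds carry the exponent $1+1/p$, so their ratio is a constant and the recursion is linear rather than of the form $e_{m+1}\le e_m-\kappa e_m^{\gamma}$ with $\gamma>1$. If that exponent matching failed, I would fall back to the standard lemma for sequences satisfying $e_{m+1}\le e_m-\kappa e_m^{\gamma}$, which yields only polynomial rates. That fallback is not needed here.
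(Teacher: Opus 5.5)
Your proposal is correct and follows essentially the same route as the paper: it combines the one-step descent bound of Proposition~\ref{prop:dict-upper}(b) with the energy--$\sigma$ bound of Proposition~\ref{prop:energy-vs-sigma-normingv2} to obtain the linear contraction $e_{m+1}\le(1-\beta/c)\,e_m$, and then iterates. Your explicit check that $\beta/c\le p\le 1$ (using $\alpha\le\cL$ and $C_{\cK}\ge1$), together with your remark that (B) must be assumed alongside (A+), tidies up the paper's case split on whether the contraction factor is $\ge 1$ or $<1$; it does not change the argument.
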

\begin{proof}
Let $\lambda_m:=\cE(u_m)-\cE(u^*).$ By Proposition~\ref{prop:dict-upper}, 
\begin{align*}
\cE(u_{m+1}) = \min_{z\in\cD} \cE(u_m+z) \le \cE(u_m) - \frac{p(\sigma(u_m))^{1+\frac{1}{p}}}{(p+1) \mathcal{L}^{\frac{1}{p}}},   
\end{align*}
thus 
\begin{align}\label{1v2}
\lambda_m - \lambda_{m+1} = \cE(u_m) - \cE(u_{m+1}) \ge \frac{p(\sigma(u_m))^{1+\frac{1}{p}}}{(p+1) \mathcal{L}^{\frac{1}{p}}}.
\end{align}
Since $\cD\cap S_{\X}$ is norming for $\X^*,$ by Proposition~\ref{prop:energy-vs-sigma-normingv2} we have
\begin{align}\label{2v2}
\lambda_m = \mathcal{E}(u_m) - \mathcal{E}(u^*) \le c\, (\sigma(u_m))^{1+\frac{1}{p}}.
\end{align}
Hence, putting \eqref{2v2} in \eqref{1v2}, we obtain
\begin{align*}
\lambda_m - \lambda_{m+1} \ge \frac{p\lambda_m}{c(p+1)\mathcal{L}^{\frac{1}{p}}} = \mu \lambda_m
\end{align*}
where $\mu =\frac{p\lambda_m}{c(p+1)\mathcal{L}^{\frac{1}{p}}} > 0.$ Since $\lambda_m\ge 0$ for all $m\in \NN_0$, in the case $\mu\ge 1$ this entails that $\lambda_m=0$ for all $m\in \NN$ and there is nothing else to prove. On the other hand, if $0<\mu<1$, then 
\begin{align*}
&\lambda_{m+1}\le (1-\mu)\lambda_m&&\forall m\in \NN_0,
\end{align*}
so iterating we get 
\begin{align*}
&\lambda_{m}\le (1-\mu)^{m}\lambda_0 &&\forall m\in \NN_0,
\end{align*}
and the desired result follows at once. 
\end{proof}

To tackle the  case in which we have only {\rm(A)} instead of {\rm(A+)}, we need further  auxiliary results. First, we obtain an upper bound for $\cE(u_m)-\cE(u^*)$ similar to that of Proposition~\ref{prop:energy-vs-sigma-normingv2}. 

\begin{proposition}
\label{prop:energy-vs-sigma-normingv3}
Let $\X$ be a reflexive Banach space, let $\cE:\X\to\RR$ be Fr\'echet differentiable and satisfy
\textnormal{(A)}--\textnormal{(B)}, and let $\cD\subset\X$ be a radial dictionary. Set
\[
\mathcal{K}:=\cD\cap S_\X .
\]
Assume that $\mathcal{K}$ is a norming set for $\X^*$, i.e., there exists $C_{\mathcal{K}}\ge 1$ such that
\[
\|f\|_{\X^*}\le C_{\mathcal{K}}\sup_{w\in\mathcal{K}}|\langle f,w\rangle|
\qquad \text{for all } f\in \X^*.
\]
Then, for each greedy sequence $(u_m)_{m\in\NN_0}$ of the minimizer $u^*$ obtained by an
$\cE$-dictionary optimization over $\cD$, there exists a constant
\[
c=c\!\left(\mathcal{K},u^*,(u_m)_{m\in\NN_0}\right)>0
\]
such that, for all $m\in\NN_0$,
\[
\cE(u_m)-\cE(u^*) \;\le\; c\,\big(\sigma(u_m)\big)^{\frac{p+1}{\,s-1\,}},
\]
where
\[
\sigma(u):=\sup_{w\in\mathcal{K}} \big|\langle \cE'(u),w\rangle\big|.
\]
\end{proposition}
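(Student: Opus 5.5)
The plan mirrors the proof of Proposition~\ref{prop:energy-vs-sigma-normingv2}. The difference is that the global constant $\cL$ and the global bi-Lipschitz estimate \eqref{Lipandmin-global} are replaced by their bounded-set versions. What makes this work is that the greedy sequence stays in a fixed ball. First, Lemma~\ref{lemmaA1b}(d) shows that $(u_m)_{m\in\NN_0}$ is bounded, so we may fix $R>0$ with $\|u_m\|\le R$ for all $m$ and $\|u^*\|\le R$. We then set $r_0:=3R$, so that for every $m$ the point $u^*$ lies in $B_{r_0}$ and the segment from $u^*$ to $u_m$, of length $\|u_m-u^*\|\le 2R\le r_0-\|u^*\|$, also lies in $B_{r_0}$. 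The constant $c$ will depend on $R$, which is why it depends on $u^*$ and on the sequence.

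Second, we apply Lemma~\ref{lemmaestimate} at the base point $x=u^*$, with direction $y=(u_m-u^*)/\|u_m-u^*\|$ and step $t=\|u_m-u^*\|$ (the case $u_m=u^*$ is trivial). Since $\cE'(u^*)=0$ by Lemma~\ref{lemmawelldefined}(a), this gives
\[
\cE(u_m)-\cE(u^*)\le \frac{\cL_{B_{r_0}}}{p+1}\,\|u_m-u^*\|^{p+1}.
\]
Third, we convert the distance to the minimizer into a gradient bound using the lower half of \eqref{Lipandmin-local}, namely $\alpha\|u_m-u^*\|^{s-1}\le \|\cE'(u_m)\|_*$. Hence
\[
\|u_m-u^*\|^{p+1}\le \alpha^{-\frac{p+1}{s-1}}\|\cE'(u_m)\|_*^{\frac{p+1}{s-1}}.
\]
This inequality needs $s>1$, which holds under (B); note also that $s\ge p+1$ by Lemma~\ref{lemmas=2}(a). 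Finally, the norming condition gives $\|\cE'(u_m)\|_*\le C_{\cK}\,\sigma(u_m)$. Since $t\mapsto t^{(p+1)/(s-1)}$ is increasing, chaining these bounds gives the claim with
\[
c=\frac{\cL_{B_{3R}}\,C_{\cK}^{\frac{p+1}{s-1}}}{(p+1)\,\alpha^{\frac{p+1}{s-1}}}.
\]

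The only real obstacle is the first step: making sure all the points where we use (A) lie in one bounded set, so that a single constant $\cL_{B_{r_0}}$ works uniformly in $m$. The boundedness in Lemma~\ref{lemmaA1b}(d) comes from monotonicity of $\cE(u_m)$ together with coercivity (Lemma~\ref{lemmabus}(c)). Lemma~\ref{lemmaestimate} requires $|t|\le r_0-\|x\|$ with $x=u^*$, and this is why $r_0=3R$ is chosen rather than $2R$. If Lemma~\ref{lemmaestimate} is instead read as requiring the integration path $x+ay$, $0\le a\le t$, to stay in $B_{r_0}$, the same choice still works by convexity of the ball. Everything else is the same algebra as in the global case, with the exponent $\frac{1}{p}$ replaced by $\frac{1}{s-1}$.
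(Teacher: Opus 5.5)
Your proposal is correct and follows the paper's proof step for step. You use boundedness of the greedy sequence from Lemma~\ref{lemmaA1b}(d), then the upper estimate of Lemma~\ref{lemmaestimate} at the base point $u^*$ with $\cE'(u^*)=0$, then the lower bound in \eqref{Lipandmin-local}, and finally the norming inequality. Your choice $r_0=3R$ is slightly more careful than the paper's radius $r>\|u^*\|+\|u_m\|$: it makes the hypothesis $|t|\le r_0-\|u^*\|$ of Lemma~\ref{lemmaestimate} hold literally, rather than only through convexity of the ball.
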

\begin{proof}
By Lemma~\ref{lemmaA1b}, there is $r>0$ such that  $\|u^*\|+\|u_m\|<r$ for all $m\in \NN$. From the norming condition we obtain 
\begin{align*}
&\sigma(u_m)\ge C_{\cK}^{-1}\|\cE'(u_m)\|.  
\end{align*}
Hence, as in the proof of Proposition~\ref{prop:energy-vs-sigma-normingv2}, using Lemma~\ref{lemmaestimate} it follows that 
\begin{align}
\cE(u_m)-\cE(u^*)=&\cE(u^*+u_m-u^*)-\cE(u^*)\nonumber \\
\le& \langle \cE'(u^*),u_m-u^*\rangle+\frac{\cL_{B_r}}{p+1}\|u_m-u^*\|^{p+1}\nonumber\\
&\underset{\cE'(u^*)=0}{=}\frac{\cL_{B_r}}{p+1}\|u_m-u^*\|^{p+1}\underset{\eqref{Lipandmin-local}}{\le}\frac{\cL_{B_r}}{(p+1)\alpha^{\frac{p+1}{s-1}}}\|\cE'(u_m)\|_*^{\frac{p+1}{s-1}}\nonumber\\
\le& \frac{\cL_{B_r} C_{\cK}^{\frac{p+1}{s-1}}}{(p+1)\alpha^{\frac{p+1}{s-1}}}\sigma^{\frac{p+1}{s-1}}(u_m).\nonumber
\end{align}
\end{proof}

Before we prove our next result on the rate of convergence, we need a lemma about the convergence of certain  sequences of positive numbers. 

\begin{lemma}\label{lemmadecreasing}
Let $(a_m)_{m\in\NN}$ be a sequence of positive numbers and let $C_1>0$ and $t>1$.
Assume that
\[
a_{m+1}\le a_m - C_1 a_m^{t}\qquad \forall m\in\NN.
\]
Then $(a_m)_{m\in\NN}$ is decreasing and $\lim_{m\to\infty}a_m=0$. Moreover, there
exists $C_2>0$ such that
\begin{align}
&a_m\le C_2 m^{-\frac{1}{t-1}}&&\forall m\in \NN.    \label{amtoprove}
\end{align}
\end{lemma}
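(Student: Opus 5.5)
Monotonicity is immediate: since $a_m>0$ and $C_1>0$, the hypothesis gives $a_{m+1}\le a_m-C_1a_m^t<a_m$, so $(a_m)$ is strictly decreasing. Being bounded below by $0$, it has a limit $a\ge 0$. Passing to the limit in $C_1a_m^t\le a_m-a_{m+1}$ gives $C_1a^t\le 0$, so $a=0$. Positivity also yields $C_1a_m^{t-1}<1$ for every $m$, a fact we keep for later.

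For the rate, we use the standard device of estimating the increments of $a_m^{-(t-1)}$, with $\gamma:=t-1>0$. Writing $a_{m+1}\le a_m(1-C_1a_m^{\gamma})$ and raising to the power $-\gamma$ (which reverses inequalities), we get
\[
a_{m+1}^{-\gamma}\ \ge\ a_m^{-\gamma}\,(1-C_1a_m^{\gamma})^{-\gamma}.
\]
Bernoulli's inequality $(1-x)^{-\gamma}\ge 1+\gamma x$ holds for $x\in[0,1)$ and $\gamma>0$, by convexity of $x\mapsto(1-x)^{-\gamma}$. Applying it with $x=C_1a_m^\gamma<1$ gives
\[
a_{m+1}^{-\gamma}\ \ge\ a_m^{-\gamma}+\gamma C_1 .
\]
Telescoping from $1$ to $m$ yields
\[
a_m^{-\gamma}\ \ge\ a_1^{-\gamma}+(m-1)\gamma C_1\ \ge\ \kappa\, m,
\qquad \kappa:=\min\{a_1^{-\gamma},\,\gamma C_1\}>0 .
\]
The last step holds because $a_1^{-\gamma}+(m-1)\gamma C_1\ge \kappa(1+(m-1))$. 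Hence
\[
a_m\le \kappa^{-1/\gamma}m^{-1/(t-1)},
\]
which is \eqref{amtoprove} with $C_2:=\kappa^{-1/(t-1)}$.

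The only delicate point is the power-raising step. It needs $1-C_1a_m^\gamma>0$, which we have from positivity of $a_{m+1}$, and the direction of the convexity inequality must be correct; the tangent-line bound at $x=0$ for the convex function $(1-x)^{-\gamma}$ gives exactly the needed lower bound. An alternative, if one prefers to avoid Bernoulli, is to apply the mean value theorem to $\phi(y)=y^{-\gamma}$ on $[a_{m+1},a_m]$. Since $\phi'(\xi)=-\gamma\xi^{-t}$ and $\xi\le a_m$, this gives
\[
a_{m+1}^{-\gamma}-a_m^{-\gamma}\ \ge\ \gamma a_m^{-t}(a_m-a_{m+1})\ \ge\ \gamma C_1,
\]
and the same telescoping argument concludes.
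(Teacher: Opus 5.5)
Your proof is correct and follows essentially the same route as the paper. The paper applies the tangent-line inequality for the convex function $x\mapsto x^{-(t-1)}$ at $a_m$ to get $a_{m+1}^{-(t-1)}-a_m^{-(t-1)}\ge (t-1)C_1$ (this is your mean-value alternative, and your Bernoulli step is an equivalent form of it) and then telescopes; your $\kappa=\min\{a_1^{-\gamma},\gamma C_1\}$ step is in fact what justifies the paper's stated constant $C_2=\max\{a_1,((t-1)C_1)^{-1/(t-1)}\}$, since the paper's displayed computation only gives the bound for $m\ge 2$ with an extra factor $2^{1/(t-1)}$.
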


\begin{proof}
Since $C_1 a_m^t>0$ for all $m$, the recursion yields $a_{m+1}\le a_m$, hence
$(a_m)$ is decreasing and bounded below by $0$. Therefore $a_m\to \ell$ for some
$\ell\ge 0$. Passing to the limit in
$a_{m+1}\le a_m-C_1a_m^t$ and using continuity of $x\mapsto x^t$ gives
\[
\ell \le \ell - C_1 \ell^t,
\]
hence $C_1\ell^t\le 0$. Since $C_1>0$ and $\ell^t\ge 0$, we obtain $\ell=0$.

Set $p:=t-1>0$. Consider $\varphi(x)=x^{-p}$ on $(0,\infty)$, which is convex and
satisfies $\varphi'(x)=-p x^{-p-1}$. By the tangent line inequality for convex
functions, for all $m$,
\[
a_{m+1}^{-p}-a_m^{-p}\ge -p\,a_m^{-p-1}(a_{m+1}-a_m).
\]
From the recursion we have $a_{m+1}-a_m\le -C_1 a_m^{p+1}$, hence
\[
-p\,a_m^{-p-1}(a_{m+1}-a_m)\ge pC_1,
\]
and therefore
\[
a_{m+1}^{-p}-a_m^{-p}\ge pC_1.
\]
Summing from $m=1$ to $m=M-1$ with $M\ge 2$ yields
\[
a_M^{-p}\ge a_1^{-p}+(M-1)pC_1\ge (M-1)pC_1,
\]
so
\[
a_M\le \bigl((M-1)pC_1\bigr)^{-1/p}\le 2^{1/p}(pC_1)^{-1/p}\,M^{-1/p}.
\]
Thus the stated rate holds with 
$C_2:=\max\left\lbrace a_1,\bigl((t-1)C_1\bigr)^{-1/(t-1)}\right\rbrace$.
\end{proof}

\begin{theorem}
\label{th:rate_convergencev3}
Let $\X$ be a reflexive Banach space, let $\cE:\X\to\RR$ be Fr\'echet differentiable and satisfy
\textnormal{(A)}--\textnormal{(B)}, and let $\cD\subset\X$ be a radial dictionary. Set
\[
\mathcal{K}:=\cD\cap S_\X .
\]
Assume that $\mathcal{K}$ is a norming set for $\X^*$.
Then every greedy sequence $(u_m)_{m\in\NN_0}$ of the minimizer $u^*$ generated by an
$\cE$-dictionary optimization over $\cD$ satisfies:

\begin{enumerate}
\item[(i)] If $s>p+1$, then
\[
\cE(u_m)-\cE(u^*) \;=\; O\!\left(m^{-\frac{p}{\,s-1-p\,}}\right).
\]

\item[(ii)] If $s=p+1$, then for every fixed $k\in\NN$,
\[
\cE(u_m)-\cE(u^*) \;=\; O\!\left(m^{-k}\right),
\]
and, moreover, there exist constants $C>0$ and $\alpha\in[0,1)$ such that
\[
\cE(u_m)-\cE(u^*) \;\le\; C\,\alpha^{m}
\qquad \text{for all } m\in\NN_0.
\]
\end{enumerate}
\end{theorem}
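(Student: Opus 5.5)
The plan is to combine the one-step descent bound of Proposition~\ref{prop:dict-upper}(a) with the energy–$\sigma$ estimate of Proposition~\ref{prop:energy-vs-sigma-normingv3}, and then feed the resulting recursion into Lemma~\ref{lemmadecreasing} (case (i)) or iterate a contraction (case (ii)). Set $\lambda_m:=\cE(u_m)-\cE(u^*)\ge 0$. By Lemma~\ref{lemmaA1b}(d) there is $r>0$ with $u_m\in B_r$ and $\|u_m\|+\|u^*\|<r$ for all $m$. Since $\cE'$ is bounded on bounded sets (Lemma~\ref{LemmaLip1}), the constant $M_r$ in Proposition~\ref{prop:dict-upper} is finite, so $\beta_r>0$. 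Applying that proposition at $x=u_m$ and using \eqref{eq:greedy_def} gives
\begin{align*}
\lambda_m-\lambda_{m+1}\ \ge\ \beta_r\,\sigma(u_m)^{1+\frac1p}\qquad\forall m\in\NN_0 .
\end{align*}

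Next, Proposition~\ref{prop:energy-vs-sigma-normingv3} yields $\lambda_m\le c\,\sigma(u_m)^{\frac{p+1}{s-1}}$, that is, $\sigma(u_m)\ge (\lambda_m/c)^{\frac{s-1}{p+1}}$. Substituting this into the descent inequality gives
\begin{align*}
\lambda_{m+1}\ \le\ \lambda_m - C_1\,\lambda_m^{t},\qquad t:=\frac{s-1}{p+1}\cdot\frac{p+1}{p}=\frac{s-1}{p},\quad C_1:=\beta_r c^{-\frac{s-1}{p}} .
\end{align*}
If some $\lambda_m=0$, then $u_m=u^*$ by uniqueness in Lemma~\ref{lemmawelldefined}(a), and the sequence is constant from then on by Lemma~\ref{lemmaA1b}(c) and monotonicity. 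Both claims are then trivial, so we may assume $\lambda_m>0$ for all $m$.

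For (i), $s>p+1$ gives $t>1$, and Lemma~\ref{lemmadecreasing} applied to $a_m=\lambda_m$ gives $\lambda_m\le C_2 m^{-1/(t-1)}$. Since $1/(t-1)=p/(s-1-p)$, this is exactly the claimed rate. For (ii), $s=p+1$ gives $t=1$, so the recursion reads $\lambda_{m+1}\le (1-C_1)\lambda_m$. If $C_1\ge 1$, then all $\lambda_m$ with $m\ge1$ vanish. Otherwise, iterating gives $\lambda_m\le (1-C_1)^m\lambda_0$, which is the exponential bound with base $1-C_1\in[0,1)$. Exponential decay implies $O(m^{-k})$ for every $k$, since $m^k(1-C_1)^m\to 0$.

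I expect the main obstacle to be bookkeeping rather than substance. The key point is that all constants ($\beta_r$, $c$, $C_1$) must be uniform in $m$, and this rests on the uniform bound on $(u_m)$ from Lemma~\ref{lemmaA1b}(d) together with the choice $x=u_m\in B_r$ in Proposition~\ref{prop:dict-upper}. The exponent arithmetic, namely $(1+\tfrac1p)\cdot\tfrac{s-1}{p+1}=\tfrac{s-1}{p}$, also needs to be checked carefully.
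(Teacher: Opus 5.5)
Your proof is correct and follows the paper's argument essentially step for step: you bound the iterates uniformly, then combine the one-step descent bound of Proposition~\ref{prop:dict-upper}(a) with the estimate $\lambda_m\le c\,\sigma(u_m)^{\frac{p+1}{s-1}}$ of Proposition~\ref{prop:energy-vs-sigma-normingv3} to get $\lambda_{m+1}\le\lambda_m-C_1\lambda_m^{(s-1)/p}$, and finish with Lemma~\ref{lemmadecreasing} when $s>p+1$ and the contraction argument of Theorem~\ref{th:rate_convergencev2} when $s=p+1$. Your handling of the degenerate case is slightly cleaner than the paper's appeal to strict monotonicity: $\lambda_m=0$ forces $u_m=u^*$ by uniqueness of the minimizer, hence $\lambda_n=0$ for all $n\ge m$.
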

\begin{proof}
Let $\lambda_m:=\cE(u_m)-\cE(u^*)$ for each $m\in \NN_0$. If $(\lambda_m)_{m\in \NN_0}$ is not strictly decreasing, by Lemma~\ref{lemmabus} there is $m_0\in \NN$ such that $\lambda_m=0$ for all $m\ge m_0$, and there is nothing else to prove. Otherwise, choose $r>0$ so that $\|u^*\|+\|u_m\|<r$ for all $m\in \NN$. By Proposition~\ref{prop:dict-upper},
\begin{align*}
&\cE(u_{m+1}) = \min_{z\in\cD} \cE(u_m+z) \le \cE(u_m) -\beta_{r} \left(\sigma(u_m)\right)^{\frac{p+1}{p}} &&\forall m\in \NN_0, 
\end{align*}
so 
\begin{align}
\lambda_m-\lambda_{m+1}\ge & \beta_r \left(\sigma(u_m)\right)^{\frac{p+1}{p}}. \label{2v3a} 
\end{align}
Since $\mathcal{D} \cap S_\X$ is norming for $\X^*,$ by Proposition~\ref{prop:energy-vs-sigma-normingv3}, we have
\begin{align}\label{2v3}
\lambda_m = \mathcal{E}(u_m) - \mathcal{E}(u^*) \le c\, (\sigma(u_m))^{{\frac{p+1}{s-1}}}
\end{align}
for all $m\in \NN_0$. In the case $s=p+1$, the proof is completed by the argument of the proof of Theorem~\ref{th:rate_convergencev2}.\\
If $s\not=p+1$, then $s-1>p$ by Lemma~\ref{lemmas=2}. From \eqref{2v3a} and \eqref{2v3} it follows that
\begin{align*}
&\lambda_m-\lambda_{m+1}\ge \frac{\beta_r \lambda_m^{\frac{s-1}{p}}}{c^{\frac{s-1}{p}}}=\mu \lambda_m^{\frac{s-1}{p}}&&\forall m\in \NN_0,
\end{align*}
where $\mu=\frac{\beta_r}{c^{\frac{s-1}{p}}}$. Thus, 
\begin{align*}
\lambda_{m+1}\le \lambda_m-\mu \lambda_m^{\frac{s-1}{p}}. 
\end{align*}
Now an application of Lemma~\ref{lemmadecreasing} with $t=\frac{s-1}{p}>1$ gives the result.
\end{proof}
\begin{remark}\rm Since $\cE'(u^*)=0$, Lemma~\ref{lemmabus} gives  
\begin{align*}
\|u_m-u\|\le & \left(\frac{s}{\alpha}\right)^{\frac{1}{s}} (\cE(u_m)-\cE(u))^{\frac{1}{s}},\qquad m\in \NN. 
\end{align*}
From the above and Theorems~\ref{th:rate_convergencev2} and ~\ref{th:rate_convergencev3} it follows that there is $c'>0$ such that 
\begin{align*}
\|u_m-u\|\le& c' \alpha^{\frac{m}{s}}
\end{align*}
if {\rm(A+)}and {\rm(B)} hold, or {\rm(A)}  and {\rm(B)} do and $s=p+1$. On other other hand, if {\rm(A)}  and {\rm(B)} hold and $s>p+1$, we get 
\begin{align*}
\|u_m-u\|\le&  c' m^{-\frac{p}{s(s-(p+1))}}.
\end{align*}
\end{remark}

\section{Examples: functionals}

\subsection{Functional in $L^{p+1(\Omega)}$}
Let $(\Omega,\mu)$ be a finite measure space and 
$\mathbb{X}=L^{p+1}(\Omega)$ for $0<p\le1$.  
Define
\[
\mathcal{E}(f)=\frac{1}{p+1}\int_\Omega |f(x)|^{p+1}\,dx
=\frac{1}{p+1}\,\|f\|_{L^{p+1}}^{p+1}.
\]
\paragraph{Derivative.}
For $f,h\in L^{p+1}(\Omega)$ and $t\in\mathbb{R}$, compute
\[
\frac{\mathcal{E}(f+th)-\mathcal{E}(f)}{t}
=\frac{1}{p+1}\int_\Omega
\frac{|f+th|^{p+1}-|f|^{p+1}}{t}\,dx.
\]
Since $\frac{d}{dt}|f+th|^{p+1}=(p+1)|f+th|^{p-1}(f+th)h$ for a.e.~$x$,  
the limit as $t\to0$ yields
\[
D\mathcal{E}[f](h)=\int_\Omega |f|^{p-1}f\,h\,dx
=\langle |f|^{p-1}f,\,h\rangle_{L^{\frac{p+1}{p}},\,L^{p+1}}.
\]
Therefore
\[
\mathcal{E}'(f)=|f|^{p-1}f\in L^{\frac{p+1}{p}}(\Omega).
\]

 
\paragraph{Verification of {\rm(A+)}}
Let $f, h\in L^{\frac{p+1}{p}}$, and define
\begin{align*}
\Omega_0:=&\{x\in \Omega: f(x)g(x)\ge 0\};\\
\Omega_1:=&\Omega\setminus \Omega_0. 
\end{align*}
We have
\begin{align*}
\|\cE'(f)-\cE'(g)\|_{L^{\frac{p+1}{p}}}^{\frac{p+1}{p}}\le&\int_{\Omega_0}| |g|^{p}-|f|^{p}|^{\frac{p+1}{p}} dx+\int_{\Omega_1}(|g|^{p}+|f|^{p})^{\frac{p+1}{p}} dx\\
\le& \int_{\Omega_0}|f-g |^{p+1} dx+2^{\frac{1}{p}}\int_{\Omega_1}|g|^{p+1}+|f|^{p+1} dx\\
\le& \|f-g\|_{L^{p+1}}^{p+1}+2^{\frac{1}{p}}\int_{\Omega_1}(|g|+|f|)^{p+1} dx\\
=& \|f-g\|_{L^{p+1}}^{p+1}+2^{\frac{1}{p}}\int_{\Omega_1}|g-f|^{p+1} dx\le (1+2^{\frac{1}{p}})\|f-g\|_{L^{p+1}}^{p+1}.
\end{align*}
Hence, 
\begin{align*}
\|\cE'(f)-\cE'(g)\|_{L^{\frac{p+1}{p}}}\le& (1+2^{\frac{1}{p}})^{\frac{p}{p+1}}\|f-g\|_{L^{p+1}}^{p},
\end{align*}
which shows that $\cE'$ is $c_p$-$p$-Lipschitz with $c_p=(1+2^{\frac{1}{p}})^{\frac{p}{p+1}}$.

\paragraph{Verification of {\rm(B)}}
For all $f,g\in L^{p+1}(\Omega)$,
\begin{align*}
\langle\mathcal{E}'(f)-\mathcal{E}'(g),\,f-g\rangle
&=\int_\Omega \big(|f|^{p-1}f-|g|^{p-1}g\big)(f-g)\,dx\\[2mm]
&\ge c_p \int_\Omega |f-g|^{p+1}\,dx
=c_p\,\|f-g\|_{L^{p+1}}^{p+1},
\end{align*}
for some constant $c_p>0$ depending only on $p$.  
Hence {\rm(B)} holds with $s=p+1$.

\subsection{Quadratic Energy (Elasticity)}
Let $\Omega\subset\mathbb R^n$ be a bounded domain and define
\[
\mathcal E(u) = \frac{1}{2}\int_\Omega |\nabla u(x)|^2\,dx, \qquad u\in H_0^1(\Omega).
\]

\subsection*{Derivative Computation}
Let $v\in H_0^1(\Omega)$ and consider the directional derivative
\[
\mathcal E'(u)v = \lim_{t\to 0} \frac{\mathcal E(u+tv) - \mathcal E(u)}{t}.
\]
Compute:
\begin{align*}
\mathcal E(u+tv) &= \frac{1}{2}\int_\Omega |\nabla u + t\nabla v|^2 dx \\
&= \frac{1}{2}\int_\Omega (|\nabla u|^2 + 2t\nabla u\cdot \nabla v + t^2|\nabla v|^2)dx.
\end{align*}
Therefore,
\[
\frac{\mathcal E(u+tv) - \mathcal E(u)}{t} = \int_\Omega \nabla u\cdot \nabla v\,dx + \frac{t}{2}\int_\Omega |\nabla v|^2dx,
\]
and letting $t\to 0$ gives
\[
\boxed{\mathcal E'(u)v = \int_\Omega \nabla u\cdot \nabla v\,dx.}
\]
Hence, by Riesz representation, $\mathcal E'(u) = -\Delta u$ in the weak sense.

\subsection*{Verification of {\rm(A+)} and {\rm(B)}}
Since $\mathcal E'$ is linear and continuous, it is globally Lipschitz with constant $\mathcal L=1$, so it satisfies {\rm(A+)} with $p=1$. Moreover,
\[
\langle \mathcal E'(u) - \mathcal E'(v), u-v \rangle = \int_\Omega |\nabla (u-v)|^2 dx \ge \|u-v\|_{H_0^1(\Omega)}^2.
\]
Thus, {\rm(B)} holds with $s=2$ and $\alpha=1$. This functional represents the \\emph{elastic potential energy} in linear elasticity and the energy dissipation in the heat equation.

\subsection{The $p$-Laplacian Energy}
For $1<p<\infty$, define
\[
\mathcal E(u) = \frac{1}{p}\int_\Omega |\nabla u(x)|^p\,dx, \qquad u\in W_0^{1,p}(\Omega).
\]

\subsection*{Derivative Computation}
Let $v\in W_0^{1,p}(\Omega)$, then
\begin{align*}
\mathcal E(u+tv) &= \frac{1}{p}\int_\Omega |\nabla u + t\nabla v|^p dx, \\
\frac{\mathcal E(u+tv) - \mathcal E(u)}{t} &= \frac{1}{p}\int_\Omega \frac{|\nabla u + t\nabla v|^p - |\nabla u|^p}{t} dx.
\end{align*}
Using the identity $\frac{d}{dt}|a+tb|^p = p|a+tb|^{p-2}(a+tb)\cdot b$, we obtain
\[
\mathcal E'(u)v = \int_\Omega |\nabla u|^{p-2}\nabla u\cdot \nabla v\,dx.
\]
Hence, the Fr\'echet derivative is given by the nonlinear operator
\[
\boxed{\mathcal E'(u) = -\mathrm{div}(|\nabla u|^{p-2}\nabla u)}.
\]

\subsection*{Verification of {\rm(A)} and {\rm(B)}}
\begin{itemize}
    \item \textbf{Property {\rm(A)}:} The map $u\mapsto |\nabla u|^{p-2}\nabla u$ is Lipschitz on bounded subsets of $W_0^{1,p}(\Omega)$, since for all $a,b\in\mathbb R^n$,
    \[
    \big||a|^{p-2}a - |b|^{p-2}b\big| \le C_p (|a| + |b|)^{p-2}|a-b|.
    \]
    Therefore, for $u,v$ in a bounded ball of $W_0^{1,p}(\Omega)$,
    \[
    \|\mathcal E'(u) - \mathcal E'(v)\|_{(W_0^{1,p})^*} \le C_R \|u-v\|_{W^{1,p}},
    \]
    which shows {\rm(A)} holds.
    \item \textbf{Property {\rm(B)}:} The $p$-Laplacian operator is strictly monotone, i.e.
    \begin{align*}
    \langle \mathcal E'(u) - \mathcal E'(v), u-v \rangle & = \int_\Omega (|\nabla u|^{p-2}\nabla u - |\nabla v|^{p-2}\nabla v)\cdot (\nabla u - \nabla v)dx \\ 
    & \ge \alpha \|\nabla(u-v)\|_p^p.
    \end{align*}
    Hence {\rm(B)} holds with $s=p$.
\end{itemize}

This functional models nonlinear diffusion, the flow of non-Newtonian fluids, and various physical processes governed by the $p$-Laplacian equation.

\section{Examples of Dictionaries}

\subsection{Neural-Network dictionary in $L^{p+1}$}

Let $\Omega \subset \mathbb{R}^d$ be a bounded measurable set with finite measure.  Fix $0<p\le 1$ and define $\X := L^{p+1}(\Omega)$. Since $p+1 \in (1,2]$, the space $\X$ is reflexive.

Let $\sigma:\mathbb{R}\to\mathbb{R}$ be a continuous, bounded, non-polynomial activation function (e.g.\ $\tanh$ or a sigmoid).

Let $\Theta \subset \mathbb{R}^d \times \mathbb{R}$ be a compact parameter set, for instance
\[
\Theta := \{(w,b): \|w\|\le W,\ |b|\le B\}.
\]

For $\theta=(w,b)\in\Theta$, define the associated neuron
\[
\varphi_\theta(x) := \sigma(\langle w,x\rangle + b),
\qquad x\in\Omega.
\]

Since $\sigma$ is bounded and $\Omega$ has finite measure, $
\varphi_\theta \in X$.

Let $\{\theta_j\}_{j\ge 1}\subset\Theta$ be a countable dense subset of parameters and define the finite-dimensional subspaces
\[
V_n := \mathrm{span}\{\varphi_{\theta_1},\dots,\varphi_{\theta_n}\}
\subset X.
\]

Each $V_n$ consists of shallow neural networks with at most $n$ neurons. Define now the following intersection
\[
\mathcal{K} := \bigcup_{n\ge1} (S_{\X} \cap V_n).
\]

Thus $\mathcal{K}$ consists of normalized shallow neural networks.

\begin{proposition}
The union $\bigcup_{n\ge1} V_n$ is dense in $L^{p+1}(\Omega)$.
\end{proposition}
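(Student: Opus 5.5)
The plan is to reduce the statement to a universal-approximation theorem for ridge functions whose parameters range over a neighbourhood of a point. There are three steps.

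\emph{Step 1: from the dense sequence to all of $\Theta$.} Put $V:=\overline{\bigcup_{n\ge1}V_n}$, the closure in $L^{p+1}(\Omega)$. We show that $\varphi_\theta\in V$ for every $\theta\in\Theta$. The map $\theta\mapsto\varphi_\theta$ is continuous from $\Theta$ into $L^{p+1}(\Omega)$. Indeed, if $\theta_k\to\theta$ then $\varphi_{\theta_k}(x)\to\varphi_\theta(x)$ pointwise by continuity of $\sigma$. Moreover $|\varphi_{\theta_k}-\varphi_\theta|^{p+1}\le(2\|\sigma\|_\infty)^{p+1}$, which is integrable because $\mu(\Omega)<\infty$. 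Dominated convergence then gives convergence in $L^{p+1}$. Since $\{\theta_j\}$ is dense in $\Theta$, it follows that $V\supset\overline{\spn}\{\varphi_\theta:\theta\in\Theta\}$. It therefore suffices to prove that this larger span is dense.

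\emph{Step 2: reduction to uniform approximation of polynomials.} Let $K:=\overline{\Omega}$, which is compact because $\Omega$ is bounded. On a finite measure space, $\|g\|_{L^{p+1}}\le\mu(\Omega)^{1/(p+1)}\|g\|_{\infty}$, so uniform approximation on $K$ implies $L^{p+1}$ approximation. Continuous functions on $K$ restricted to $\Omega$ are dense in $L^{p+1}(\Omega)$, and by Stone--Weierstrass polynomials are dense in $C(K)$. It is therefore enough to show that every monomial $x^\gamma$ lies in the $C(K)$-closure of $\spn\{\varphi_\theta:\theta\in\Theta\}$. Here I use that $\Theta$ contains the neighbourhood $U:=\{\|w\|<W\}\times(-B,B)$ of $(0,0)$, as in the example; in general one should assume that $\Theta$ has nonempty interior.

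\emph{Step 3 (main obstacle): bounded weights and a merely continuous $\sigma$.} First mollify. For $\phi\in C_c^\infty(-\delta,\delta)$, the function $\sigma_\phi:=\sigma*\phi$ is $C^\infty$. I also need $\sigma_\phi$ not to be a polynomial for suitable $\phi$. If every such $\sigma_\phi$ were a polynomial, a Baire category argument (Pinkus' lemma) bounds their degrees uniformly; letting $\phi$ run through an approximate identity then forces $\sigma$ itself to be a polynomial, a contradiction.

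Next, put $\sigma_\phi$ back into the span. For fixed $(w,b)$ with $|b|<B-\delta$, the function $x\mapsto\sigma_\phi(\langle w,x\rangle+b)=\int\sigma(\langle w,x\rangle+b-\tau)\phi(\tau)\,d\tau$ is a uniform limit on $K$ of Riemann sums. This uses uniform continuity of $\sigma$ on the compact set of values taken by $\langle w,x\rangle + b - \tau$. Each Riemann sum lies in $\spn\{\varphi_\theta:\theta\in U\}$.

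Finally, differentiate in the weights. For a multi-index $\gamma$ with $|\gamma|=k$, we have $\partial_w^\gamma\,\sigma_\phi(\langle w,x\rangle+b)\big|_{w=0}=x^\gamma\sigma_\phi^{(k)}(b)$. This derivative is a uniform limit on $K$ of finite-difference quotients taken at points $w$ near $0$, which stay inside $\{\|w\|<W\}$. Such quotients belong to the closure obtained in the previous paragraph. Since $\sigma_\phi$ is not a polynomial, $\sigma_\phi^{(k)}$ does not vanish identically on any interval; otherwise $\sigma_\phi$ would agree there with a polynomial, and one must then exclude this separately, for instance by choosing $\phi$ suitably or by invoking the corresponding statement in Pinkus' theorem. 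Hence there is some $b$ with $|b|<B-\delta$ and $\sigma_\phi^{(k)}(b)\neq0$. Dividing by this value shows that $x^\gamma$ lies in the closure.

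I expect this last point to be the delicate one: nonvanishing of $\sigma_\phi^{(k)}$ inside the bounded interval $(-B+\delta,B-\delta)$. A sufficient hypothesis would be that $\sigma$ is not a polynomial on $(-B,B)$. This holds for $\tanh$ and the logistic sigmoid, which are real-analytic and not polynomials. If needed I would state this hypothesis explicitly, in place of the bare global condition that $\sigma$ is non-polynomial.
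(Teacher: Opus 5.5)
The obstacle you flag at the end is real, and under the stated hypotheses it cannot be removed: with $\sigma$ merely continuous, bounded and non-polynomial and $\Theta=\{\|w\|\le W,\ |b|\le B\}$, the proposition is false.

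\emph{Counterexample.} Let $R:=\sup_{x\in\Omega}\|x\|$ and $M:=WR+B$, so that $|\langle w,x\rangle+b|\le M$ for all $(w,b)\in\Theta$ and $x\in\Omega$. Take $\sigma(t):=\min\{1,\max\{0,|t|-M\}\}$. It is continuous, bounded and not a polynomial, yet every $\varphi_\theta$ vanishes on $\Omega$, so $\bigcup_n V_n=\{0\}$. More generally, if $\sigma$ agrees with a polynomial on $[-M,M]$, all $V_n$ lie in one finite-dimensional space.

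\emph{Where the paper goes wrong.} The paper's proof cites the classical universal approximation theorem, which concerns networks with $(w_j,b_j)$ ranging over all of $\mathbb{R}^d\times\mathbb{R}$. Its final sentence only passes from $\Theta$ to the dense sequence $\{\theta_j\}$, which your Step~1 makes rigorous by dominated convergence. It never passes from $\mathbb{R}^{d+1}$ to $\Theta$. So an extra hypothesis such as ``$\sigma$ is not a polynomial on $(-B,B)$'' is necessary; it holds for $\tanh$ and the logistic function. Your localized Leshno--Lin--Pinkus--Schocken argument, using only $w$ near $0$ and $|b|<B$, proves a correct version of the statement; the paper's one-line citation does not.

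\emph{A sentence in Step~3 must be replaced.} The claim ``since $\sigma_\phi$ is not a polynomial, $\sigma_\phi^{(k)}$ does not vanish identically on any interval'' is false in general. Global non-polynomiality of $\sigma_\phi$, which is all the Baire lemma gives, is not what you need. Argue directly instead; this also makes the Baire step unnecessary.
\begin{itemize}
\item Set $I_\delta:=(-B+\delta,B-\delta)$. If $\sigma$ were a polynomial on every $I_\delta$, these polynomials would coincide, so $\sigma$ would be a polynomial on $(-B,B)$. Hence you can fix $\delta$ with $\sigma|_{I_\delta}$ not a polynomial.
\item Fix $k$ and suppose $\sigma_\phi^{(k)}\equiv0$ on $I_\delta$ for every $\phi\in C_c^\infty(-\delta,\delta)$. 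Note that $\sigma_\phi|_{I_\delta}$ involves only $\sigma|_{(-B,B)}$.
\item Then each such $\sigma_\phi$ is a polynomial of degree $<k$ on $I_\delta$. Along an approximate identity, $\sigma_\phi\to\sigma$ locally uniformly on $I_\delta$. Polynomials of degree $<k$ form a finite-dimensional, hence closed, space, so $\sigma$ would be one on $I_\delta$, a contradiction.
\end{itemize}
Thus for each $k$ there are $\phi$ (depending on $k$) and $b\in I_\delta$ with $\sigma_\phi^{(k)}(b)\neq0$, and the rest of your Step~3 goes through.

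\emph{A small correction.} Your argument needs a neighbourhood of $w=0$, not merely a nonempty interior of $\Theta$. Differentiating at $w_0\neq0$ yields $x^\gamma\sigma_\phi^{(k)}(\langle w_0,x\rangle+b)$, not a monomial. For real-analytic non-polynomial $\sigma$, any open parameter set does suffice, via the identity theorem and the global theorem.
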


\begin{proof}
Since $\sigma$ is continuous and non-polynomial, the classical universal approximation theorem implies that finite linear combinations of the form
\[
x \mapsto \sum_{j=1}^N a_j \sigma(\langle w_j,x\rangle + b_j)
\]
are dense in $C(\Omega)$ when $\Omega$ is compact.

Since $\Omega$ has finite measure and $C(\Omega)$ is dense in $L^{p+1}(\Omega)$, it follows that finite neural networks are dense in $L^{p+1}(\Omega)$.

Because $\{\theta_j\}$ is dense in $\Theta$, restricting parameters to the countable dense set preserves density.
\end{proof}

\begin{proposition}
The set $\mathcal{K}$ is $1$-norming for $\X^*$.
\end{proposition}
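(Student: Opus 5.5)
To show that $\mathcal{K}$ is $1$-norming, I have to prove $\|\phi\|_*\le \sup_{z\in\mathcal{K}}|\langle\phi,z\rangle|$ for every $\phi\in\X^*$. The reverse inequality is automatic, because $\mathcal{K}\subset S_{\X}$. The plan is to combine the density of $\bigcup_{n\ge1}V_n$, established in the preceding proposition, with the fact that $\mathcal{K}$ is exactly the set of norm-one elements of that dense subspace.

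First, set $V:=\bigcup_{n\ge1}V_n$. Since $V_n\subset V_{n+1}$, $V$ is a linear subspace of $\X$. Every nonzero $v\in V$ lies in some $V_n$, so $v/\|v\|\in S_{\X}\cap V_n\subset\mathcal{K}$. Conversely, every element of $\mathcal{K}$ is a unit vector of $V$. Hence $\mathcal{K}=S_{\X}\cap V$.

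Second, fix $\phi\in\X^*$ and $\varepsilon>0$. By the definition of the dual norm there is $x\in S_{\X}$ with $\langle\phi,x\rangle>\|\phi\|_*-\varepsilon$. By density of $V$ in $L^{p+1}(\Omega)$ there is $v\in V$ with $\|x-v\|<\varepsilon$. In particular $v\ne0$ once $\varepsilon<1$, and $\bigl|\|v\|-1\bigr|<\varepsilon$. Put $z:=v/\|v\|\in\mathcal{K}$. Then
\[
\|z-x\|\le \|z-v\|+\|v-x\| = \bigl|1-\|v\|\bigr|+\|v-x\|<2\varepsilon,
\]
and therefore
\[
|\langle\phi,z\rangle|\ge\langle\phi,x\rangle-\|\phi\|_*\|z-x\|>\|\phi\|_*-\varepsilon-2\varepsilon\|\phi\|_*.
\]
Letting $\varepsilon\downarrow0$ gives $\sup_{z\in\mathcal{K}}|\langle\phi,z\rangle|\ge\|\phi\|_*$, which is the claim with $C_{\mathcal K}=1$.

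The only substantive ingredient is the density statement, which is taken from the preceding proposition. Beyond that, the one point needing care is that normalization does not destroy the approximation; this is handled by the estimate $\|z-x\|<2\varepsilon$ above. So I expect no real obstacle here; the main risk sits upstream, in the validity of that density result, which the present argument assumes.
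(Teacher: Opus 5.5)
Your proof is correct and rests on the same idea as the paper's: density of $V=\bigcup_{n}V_n$ together with continuity of $\phi$ forces $\sup_{z\in\mathcal K}|\langle\phi,z\rangle|=\|\phi\|_*$, because $\mathcal K=S_\X\cap V$. The paper argues through weak density of $S_\X\cap V$ in $S_\X$ and weak continuity of $\phi$; you stay in the norm topology and make the normalization step explicit via $\|z-x\|<2\varepsilon$, which is exactly the justification the paper leaves implicit when it passes from weak density of $V$ to weak density of $S_\X\cap V$ in $S_\X$.
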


\begin{proof}
Let $\phi \in X^*$.

Since $\X=L^{p+1}(\Omega)$ is reflexive, $\X^* = L^{\frac{p+1}{p}}(\Omega)$.

The norm in the dual satisfies
\[
\|\phi\|_* = \sup_{u\in S_{\X}} |\langle \phi,u\rangle|.
\]

Because $\bigcup_n V_n$ is dense in $\X$ in norm, it is weakly dense in $\X$.

Therefore $S_{\X} \cap \bigcup_n V_n$ is weakly dense in $S_{\X}$.

Since $\phi$ is weakly continuous,
\[
\sup_{z\in\mathcal{K}} |\langle \phi,z\rangle|
=
\sup_{u\in S_{\X}} |\langle \phi,u\rangle|
=
\|\phi\|_*.
\]

Thus $\mathcal{K}$ is $1$-norming.
\end{proof}

Define now
\[
\mathcal{D}_0 := \{\lambda z : \lambda\in\mathbb{R},\, z\in\mathcal{K}\}.
\]

Then $\mathcal{D}_0$ is a balanced cone.

Define the weak closure
\[
\mathcal{D} := \overline{\mathcal{D}_0}^{\,w}.
\]

\begin{proposition}
$\mathcal{D}$ is a radial dictionary in the sense of the paper.
\end{proposition}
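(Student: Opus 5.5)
We must show that $\cD$ is a weakly closed balanced cone. Weak closedness is immediate, since $\cD$ is by definition the weak closure of $\cD_0$. What remains is that the cone and symmetry properties of $\cD_0$ survive passage to the weak closure.

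\textbf{Step 1: $\cD_0$ is a balanced cone.} Let $x=\lambda z\in\cD_0$ with $\lambda\in\RR$ and $z\in\cK$.
\begin{itemize}
\item For $\mu\ge 0$ we have $\mu x=(\mu\lambda)z\in\cD_0$.
\item We have $-x=(-\lambda)z\in\cD_0$, so $\cD_0=-\cD_0$.
\end{itemize}
Note that $0=0\cdot z\in\cD_0$, since $\cK\neq\emptyset$ (each $V_n$ is nontrivial because the neurons $\varphi_{\theta}$ are not all zero).

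\textbf{Step 2: the properties pass to the weak closure.} For each $\mu\in\RR$, the map $T_\mu:\X\to\X$, $T_\mu(x)=\mu x$, is linear and norm-continuous, hence weak-to-weak continuous. A continuous map sends the closure of a set into the closure of its image, so
\[
T_\mu\bigl(\overline{\cD_0}^{\,w}\bigr)\subset \overline{T_\mu(\cD_0)}^{\,w}\subset \overline{\cD_0}^{\,w}.
\]
Here the last inclusion uses $T_\mu(\cD_0)\subset\cD_0$, which Step 1 gives for every real $\mu$: for $\mu\ge0$ by the cone property, and for $\mu<0$ by combining it with symmetry.

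It follows that $\mu\cD\subset\cD$ for all $\mu\in\RR$. Taking $\mu\ge 0$ shows that $\cD$ is a cone. Taking $\mu=-1$ gives $-\cD\subset\cD$, and applying $-1$ once more yields $\cD=-\cD$.

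I would phrase this step with nets, or equivalently with the weak topology directly. Sequences are not enough here, because weak closure in infinite dimensions is not sequential in general.

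\textbf{Step 3: the ambient hypotheses.} $\X=L^{p+1}(\Omega)$ is reflexive for $1<p+1\le 2$, as already noted. Combining this with Steps 1 and 2 shows that $\cD$ meets the definition of a radial dictionary: a weakly closed, possibly nonconvex, balanced cone in a reflexive Banach space.

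There is no real obstacle. The only delicate point is using the weak-to-weak continuity of scalar multiplication rather than a sequential argument.

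Optionally, one could remark that $\cD$ may be much larger than $\cD_0$. Since the sphere is weakly dense in the ball and $\cup_n V_n$ is dense, $\cD$ may even be all of $\X$. This does not affect the statement, and $\cD\cap S_\X\supset\cK$ remains $1$-norming.
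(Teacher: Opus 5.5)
Your proof is correct and follows the paper's argument: $\cD$ is weakly closed by construction, and the cone and symmetry properties of $\cD_0$ pass to its weak closure because $x\mapsto \mu x$ is weak-to-weak continuous for every $\mu\in\RR$. Two side notes: your claim that \emph{each} $V_n$ is nontrivial is neither justified nor needed, and since $\cD_0=\bigcup_n V_n$ is a dense linear subspace, Mazur's theorem gives $\cD=\X$ outright, not merely ``may be''.
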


\begin{proof}
By construction:

1. $\mathcal{D}$ is weakly closed.

2. It is a cone because scalar multiplication is weakly continuous.

3. It is balanced because $\mathcal{D}_0$ is balanced and closure preserves symmetry.

Thus $\mathcal{D}$ is a weakly closed balanced cone.
\end{proof}

Moreover,
\[
\mathcal{D} \cap S_{\X}
=
\overline{\mathcal{K}}^{\,w},
\]
which remains $1$-norming.









\subsection{A reflexive neural example with $\mathcal D\neq \X$}

Fix a finite dataset $\{x_i\}_{i=1}^m \subset \mathbb R^d$ with $m\ge 2$.
Let
\[
X:=\mathbb R^m,\qquad \|u\|:=\|u\|_2.
\]
Then $\X$ is a Hilbert space, hence reflexive. Moreover, in finite dimension the weak and strong topologies coincide,
so ``weakly closed'' is equivalent to ``(norm) closed''.

Let $\sigma:\mathbb R\to\mathbb R$ be any continuous activation (e.g.\ $\tanh$).

For each parameter $\theta=(w,b)\in\mathbb R^d\times\mathbb R$, define the \emph{feature vector}
\[
\varphi_\theta \in \mathbb R^m,
\qquad 
(\varphi_\theta)_i := \sigma(\langle w,x_i\rangle + b),\quad i=1,\dots,m.
\]

Choose parameters $\theta_1,\dots,\theta_m$ such that the vectors
\[
v_j := \varphi_{\theta_j}\in\mathbb R^m,\qquad j=1,\dots,m,
\]
are linearly independent. (For generic choices this holds; one can also enforce it by construction.)

Define the normalized atoms
\[
k_j := \frac{v_j}{\|v_j\|_2}\in S_{\X},\qquad j=1,\dots,m,
\]
and set
\[
\mathcal K := \{\pm k_1,\dots,\pm k_m\}\subset S_{\X}.
\]

Define the radial dictionary (balanced cone)
\[
\mathcal D :=  \{\lambda z:\ \lambda\in\mathbb R,\ z\in\mathcal K\}.
\]
Equivalently, $\mathcal D$ is the union of the $m$ one-dimensional subspaces (lines)
\[
\mathcal D = \bigcup_{j=1}^m \mathrm{span}\{k_j\}.
\]

\begin{proposition}[Radial dictionary and $\mathcal D\neq \X$]
$\mathcal D$ is a weakly closed balanced cone in $\X$.
\end{proposition}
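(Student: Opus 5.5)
We have to show two things: that $\mathcal D=\bigcup_{j=1}^m \mathrm{span}\{k_j\}$ is a balanced cone, and that it is weakly closed. By the title of the proposition we should also check that $\mathcal D\neq\X$.

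\textbf{Cone and balance.} These follow directly from the definition $\mathcal D=\{\lambda z:\lambda\in\RR,\ z\in\mathcal K\}$. If $x=\lambda z\in\mathcal D$ and $\mu\ge 0$, then $\mu x=(\mu\lambda)z\in\mathcal D$, so $\mathcal D$ is a cone. Likewise $-x=(-\lambda)z\in\mathcal D$, so $\mathcal D=-\mathcal D$ and the cone is balanced. This also justifies the stated equality with the union of lines, since $\pm k_j$ generate the same line.

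\textbf{Weak closedness.} In finite dimension the weak and norm topologies coincide, as the paper notes, so it suffices to show that $\mathcal D$ is norm closed. Each line $\mathrm{span}\{k_j\}$ is a finite-dimensional subspace and hence closed. A finite union of closed sets is closed, so $\mathcal D$ is closed.

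Should one prefer to avoid invoking the coincidence of topologies, there is a direct argument. Each line is convex and closed, so it is weakly closed by Mazur's theorem, and finite unions of weakly closed sets are weakly closed. The only point requiring care is that the union be finite; an infinite union of lines need not be closed. Here finiteness holds by construction, so I expect no real obstacle.

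\textbf{$\mathcal D\neq\X$.} Since $m\ge 2$, we can take $x=k_1+k_2$. Suppose $x=\lambda k_j$ for some $j$ and some $\lambda\in\RR$. This is a nontrivial linear relation among the linearly independent vectors $k_1,\dots,k_m$:
\begin{itemize}
\item if $j\notin\{1,2\}$, the coefficients of $k_1$ and $k_2$ are both $1\neq 0$;
\item if $j\in\{1,2\}$, the coefficient of the other index among $\{1,2\}$ is $1\neq 0$.
\end{itemize}
In either case this contradicts linear independence, so $x\notin\mathcal D$. Hence $\mathcal D$ is a proper, non-convex radial dictionary.
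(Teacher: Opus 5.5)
Your proof is correct and matches the paper's argument for the cone, balance, and weak-closedness parts: since the space is finite-dimensional, weak closedness reduces to norm closedness of a finite union of closed lines. Your linear-independence argument with $k_1+k_2$ for $\mathcal D\neq\X$ is also correct, and it supplies what the paper's proof announces under that heading but leaves blank.
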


\begin{proof}
\emph{Cone and balancedness.}
If $z\in\mathcal D$ and $\alpha\ge 0$, then $\alpha z\in\mathcal D$ by construction.
Also $\mathcal D=-\mathcal D$, hence it is balanced.

\emph{Weakly closed.}
In finite dimension weakly closed equals (norm) closed. Each line $\mathrm{span}\{k_j\}$ is closed, and a finite union of closed sets is closed. Thus $\mathcal D$ is (weakly) closed.

\emph{$\mathcal D\neq \X$.}
\end{proof}

Since $\X$ is Hilbert, we identify $\X^*$ with $\X$ and the dual norm equals $\|\cdot\|_2$.
We show that $\mathcal K$ is norming for $\X^*$ with an explicit constant depending on the conditioning of the atoms.

Let $K$ be the $m\times m$ matrix whose columns are the vectors $k_j$:
\[
K := [k_1\ \cdots\ k_m]\in\mathbb R^{m\times m}.
\]
Since the $k_j$ are linearly independent, $K$ is invertible. Let $\sigma_{\min}(K)>0$ denote its smallest singular value.

\begin{proposition}[$\mathcal K$ is $C$-norming]
The set $\mathcal K=\{\pm k_1,\dots,\pm k_m\}$ is $C$-norming for $\X^*$ with
\[
C = \frac{1}{\sigma_{\min}(K)}.
\]
That is, for every $\phi\in X^*$,
\[
\|\phi\|_* \le C \sup_{z\in\mathcal K} |\langle \phi,z\rangle|.
\]
\end{proposition}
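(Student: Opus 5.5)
The natural route is to rewrite the supremum over $\mathcal K$ as a vector norm of $K^{\top}\phi$ and then use the singular value bound. Identify $\phi\in\X^*$ with a vector in $\RR^m$ through the Hilbert structure. For each $j$, $\langle \phi,k_j\rangle=(K^{\top}\phi)_j$, and $\mathcal K$ is symmetric, so
\[
\sup_{z\in\mathcal K}|\langle\phi,z\rangle|=\max_{1\le j\le m}|(K^{\top}\phi)_j|=\|K^{\top}\phi\|_\infty .
\]
The proof then reduces to a lower bound for $\|K^{\top}\phi\|_\infty$ in terms of $\|\phi\|_2$. This is the same kind of verification as in the $L^{p+1}$ example: only the dual norm and the atoms enter.

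The key steps, in order, are the following.
\begin{enumerate}
\item $K$ is invertible because the $k_j$ are linearly independent. The matrices $K$ and $K^{\top}$ have the same singular values. The variational characterization of the smallest singular value then gives
\[
\|K^{\top}\phi\|_2\ \ge\ \sigma_{\min}(K)\,\|\phi\|_2 \qquad\text{for all }\phi\in\RR^m .
\]
\item Compare the Euclidean and sup norms on $\RR^m$:
\[
\|K^{\top}\phi\|_2\le \sqrt m\,\|K^{\top}\phi\|_\infty .
\]
\item Combining the two steps yields
\[
\|\phi\|_*=\|\phi\|_2\le \frac{\sqrt m}{\sigma_{\min}(K)}\ \sup_{z\in\mathcal K}|\langle\phi,z\rangle| .
\]
\end{enumerate}
So $\mathcal K$ is norming, with $C_{\mathcal K}\le \sqrt m/\sigma_{\min}(K)$.

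The delicate point is the exact constant. Step~2 loses a factor $\sqrt m$, and this loss cannot be removed in general. For example, if the $k_j$ are the standard basis vectors, then $\sigma_{\min}(K)=1$. Taking $\phi=(1,\dots,1)/\sqrt m$ gives $\|\phi\|_2=1$ but $\sup_{z\in\mathcal K}|\langle\phi,z\rangle|=1/\sqrt m$. Hence the constant $1/\sigma_{\min}(K)$ as literally stated fails for $m\ge2$.

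I would therefore prove the statement with $C=\sqrt m/\sigma_{\min}(K)$ and record this correction. The correction is harmless for the rest of the paper, since only the existence of a finite norming constant is used in Theorems~\ref{th:rate_convergencev2} and~\ref{th:rate_convergencev3}.

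Alternatively, one can keep $C=1/\sigma_{\min}(K)$ by replacing the sup over $\mathcal K$ with the $\ell^2$-aggregate $\bigl(\sum_j\langle\phi,k_j\rangle^2\bigr)^{1/2}$. In that case Step~1 alone suffices.
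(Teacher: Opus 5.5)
Your argument is the paper's own proof. The paper also writes $\sup_{z\in\mathcal K}|\langle\phi,z\rangle|=\|K^{\top}\phi\|_\infty$ and combines $\|K^{\top}\phi\|_2\ge\sigma_{\min}(K)\|\phi\|_2$ with $\|\cdot\|_2\le\sqrt m\,\|\cdot\|_\infty$, so it too proves only $C=\sqrt m/\sigma_{\min}(K)$; its parenthetical claim that $\|\cdot\|_\infty\le\|\cdot\|_2$ recovers $C=1/\sigma_{\min}(K)$ uses an inequality that bounds the supremum from above, not below.

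Your counterexample is correct, and the stated constant in fact fails for every invertible $K$ once $m\ge2$. Averaging $\|(K^{\top})^{-1}\epsilon\|_2^2$ over sign vectors $\epsilon\in\{\pm1\}^m$ gives some $\epsilon$ with $\|(K^{\top})^{-1}\epsilon\|_2^2\ge\sum_i\sigma_i(K)^{-2}>\sigma_{\min}(K)^{-2}$. Then $\phi=(K^{\top})^{-1}\epsilon$ satisfies $\sup_{z\in\mathcal K}|\langle\phi,z\rangle|=1<\sigma_{\min}(K)\|\phi\|_2$. So the proposition should be read with $C=\sqrt m/\sigma_{\min}(K)$, which is all the later theorems need.
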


\begin{proof}
Identify $\phi\in X^*$ with a vector $\phi\in\mathbb R^m$ via the Riesz map, so $\|\phi\|_*=\|\phi\|_2$ and $\langle\phi,u\rangle=\phi^\top u$.

Compute
\[
K^\top \phi = 
\begin{pmatrix}
\langle \phi,k_1\rangle\\
\vdots\\
\langle \phi,k_m\rangle
\end{pmatrix}.
\]
Hence
\[
\|K^\top \phi\|_\infty = \max_{1\le j\le m} |\langle \phi,k_j\rangle|
= \sup_{z\in\mathcal K}|\langle \phi,z\rangle|.
\]
Also, $\|x\|_2\le \sqrt{m}\|x\|_\infty$ for all $x\in\mathbb R^m$, so
\[
\|K^\top \phi\|_2 \le \sqrt{m}\,\|K^\top \phi\|_\infty
= \sqrt{m}\,\sup_{z\in\mathcal K}|\langle \phi,z\rangle|.
\]
On the other hand, since $\sigma_{\min}(K)=\sigma_{\min}(K^\top)$,
\[
\|K^\top \phi\|_2 \ge \sigma_{\min}(K^\top)\,\|\phi\|_2
=\sigma_{\min}(K)\,\|\phi\|_*.
\]
Combining,
\[
\sigma_{\min}(K)\,\|\phi\|_* \le \|K^\top \phi\|_2
\le \sqrt{m}\,\sup_{z\in\mathcal K}|\langle \phi,z\rangle|.
\]
Thus
\[
\|\phi\|_* \le \frac{\sqrt{m}}{\sigma_{\min}(K)}\sup_{z\in\mathcal K}|\langle \phi,z\rangle|.
\]
In particular, $\mathcal K$ is norming. (If one prefers $C=1/\sigma_{\min}(K)$, use $\|\cdot\|_\infty\le \|\cdot\|_2$ to get
$\sup_{z\in\mathcal K}|\langle\phi,z\rangle| \ge \|K^\top\phi\|_2/\sqrt{m}$.)
\end{proof}






\begin{remark}
    The dictionary-restricted greedy algorithm adds, at each step, the best ``single-neuron feature direction'' among the finite set $\{\varphi_{\theta_j}\}$,
i.e.\ it chooses a line $\mathrm{span}\{k_j\}$ and performs an exact one-dimensional minimization of $\mathcal E$ along that direction.
\end{remark}

\subsection{Example: A proper weakly closed radial dictionary in $\ell^q$ with a norming unit slice}\label{sectionexamplenotall1}

Let $1<q<\infty$ and set $\X:=\ell^q$, with norm $\|x\|_q=(\sum_{n\ge1}|x_n|^q)^{1/q}$.
Then $\X$ is reflexive and $\X^*=\ell^{q'}$.

Fix a constant $c\in(0,1)$ and define the (proper) \emph{coordinate-dominance cone}
\begin{equation}\label{eq:D_lq}
\mathcal D
:=
\{\,x\in \ell^q:\ |x_1|\ge c\,\|x\|_q\,\},
\qquad
\mathcal K:=\mathcal D\cap S_{\X}=\{u\in S_{\ell^q}:\ |u_1|\ge c\}.
\end{equation}
Here $x_1$ denotes the first coordinate of $x=(x_n)_{n\ge1}$.

\begin{proposition}\label{propositionnotall0} 
$\mathcal D$ is a balanced cone in $\X$, it is weakly closed, and $\mathcal D\neq \X$.
\end{proposition}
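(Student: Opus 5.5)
We verify the three properties of $\mathcal D$ directly from the definition in \eqref{eq:D_lq}, in the order cone, balanced, $\mathcal D\neq\X$, weak closedness.

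\textbf{Cone and balancedness.} If $x\in\mathcal D$ and $\lambda\in\RR$, then $|(\lambda x)_1|=|\lambda||x_1|\ge c|\lambda|\|x\|_q=c\|\lambda x\|_q$. Hence $\lambda x\in\mathcal D$ for all real $\lambda$. In particular $\mathcal D$ is a cone and $\mathcal D=-\mathcal D$.

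\textbf{Properness.} The unit vector $e_2$ has first coordinate $0$ and $\|e_2\|_q=1$. Since $c>0$, the condition $0\ge c$ fails, so $e_2\notin\mathcal D$ and therefore $\mathcal D\neq\X$.

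\textbf{Weak closedness.} This is the main point, because the norm is not weakly continuous. We only need the weak lower semicontinuity of $\|\cdot\|_q$ (equivalently, that closed balls are weakly closed).

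First assume $\mathcal D$ is weakly closed under nets, which is the correct notion since the weak topology on $\ell^q$ is not metrizable. Let $(x_\gamma)$ be a net in $\mathcal D$ converging weakly to $x$. The coordinate functional $e_1^*\in\ell^{q'}$ is weakly continuous, so $x_{\gamma,1}\to x_1$. Weak lower semicontinuity of the norm gives $\|x\|_q\le\liminf_\gamma\|x_\gamma\|_q$. Combining these with membership in $\mathcal D$,
\[
c\|x\|_q\le \liminf_\gamma c\|x_\gamma\|_q\le\liminf_\gamma |x_{\gamma,1}|=|x_1|.
\]
Hence $x\in\mathcal D$.

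A cleaner formulation avoids nets entirely. Write
\[
\mathcal D=\bigcap_{\phi\in B_{\ell^{q'}}}\{x:\ |x_1|\ge c\,\langle\phi,x\rangle\},
\]
using $\|x\|_q=\sup_{\|\phi\|_{q'}\le1}\langle\phi,x\rangle$. Each set in the intersection is defined by a condition on the weakly continuous functionals $x\mapsto x_1$ and $x\mapsto\langle\phi,x\rangle$, so it is weakly closed. An intersection of weakly closed sets is weakly closed, which gives the claim.

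Finally, $\mathcal D$ is already norm closed, so one might try to conclude via Mazur's theorem. That route fails: Mazur's theorem requires convexity, and $\mathcal D$ is not convex (for example $e_1+ te_2$ and $-e_1+te_2$ lie in $\mathcal D$ for small $t$, but their midpoint $te_2$ does not). The argument above sidesteps this by relying only on lower semicontinuity of the norm.
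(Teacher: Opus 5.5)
Your proof is correct and follows the paper's argument: you use the same witness $e_2$ for properness, and you get weak closedness from weak continuity of $x\mapsto x_1$ together with weak lower semicontinuity of $\|\cdot\|_q$. (The phrase ``First assume $\mathcal D$ is weakly closed under nets'' should read ``we show'', since as written it assumes the conclusion.) Your net argument, and your representation of $\mathcal D$ as the intersection of the weakly closed sets $\{x:\ |x_1|\ge c\langle\phi,x\rangle\}$, is slightly more careful than the paper. The paper's sequential argument strictly yields only weak sequential closedness, which for a non-convex set in infinite dimensions is not automatically the same as weak closedness.
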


\begin{proof}
\emph{Cone and balancedness.}
If $x\in\mathcal D$ and $\lambda\ge0$, then
$|(\lambda x)_1|=\lambda|x_1|\ge c\,\lambda\|x\|_q=c\|\lambda x\|_q$, so $\lambda x\in\mathcal D$.
Also $\mathcal D=-\mathcal D$ because the defining inequality uses $|x_1|$.

\emph{Weak closedness.}
Let $x^{(m)}\rightharpoonup x$ weakly in $\ell^q$ and assume $x^{(m)}\in\mathcal D$ for all $m$.
The coordinate functional $x\mapsto x_1$ is weakly continuous in $\ell^q$ because
$x_1=\langle x,e_1\rangle$ with $e_1=(1,0,0,\dots)\in \ell^{q'}$.
Hence $x^{(m)}_1\to x_1$.

Moreover, the norm is weakly lower semicontinuous, i.e.
\[
\|x\|_q \le \liminf_{m\to\infty}\|x^{(m)}\|_q.
\]
Since $|x^{(m)}_1|\ge c\|x^{(m)}\|_q$, taking limits yields
\[
|x_1|
=\lim_{m\to\infty}|x^{(m)}_1|
\ge c\limsup_{m\to\infty}\|x^{(m)}\|_q
\ge c\,\|x\|_q,
\]
so $x\in\mathcal D$. Thus $\mathcal D$ is weakly closed.

\emph{Properness.}
Take any nonzero $x\in\ell^q$ with $x_1=0$ (e.g.\ $x=e_2$). Then $|x_1|=0<c\|x\|_q$, so $x\notin\mathcal D$.
Hence $\mathcal D\neq \X$.
\end{proof}

\begin{proposition}\label{propositionnotall1} 
For every $\phi\in X^*=\ell^{q'}$,
\[
\|\phi\|_{q'}
\le
C(c,q)\,\sup_{u\in\mathcal K}|\langle \phi,u\rangle|,
\qquad
C(c,q):=\frac{1}{\min\{c,(1-c^q)^{1/q}\}}.
\]
In particular, $\mathcal K$ is a norming set for $\X^*$.
\end{proposition}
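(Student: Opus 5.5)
The plan is to test $\phi$ against one well-chosen vector of $\mathcal K$ that mixes the first coordinate with the tail, and then compare $\|\phi\|_{q'}$ with the resulting lower bound. Write $\phi=(\phi_1,\phi')$, where $\phi'=(\phi_2,\phi_3,\dots)$ is the tail, so that $\|\phi\|_{q'}^{q'}=|\phi_1|^{q'}+\|\phi'\|_{q'}^{q'}$. Set $a:=(1-c^q)^{1/q}\in(0,1)$, so that $c^q+a^q=1$.

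\emph{Step 1 (test vectors).} By duality in $\ell^q$ restricted to coordinates $n\ge2$, for every $\varepsilon>0$ there is a unit vector $v\in\ell^q$ with $v_1=0$ and $\langle\phi',v\rangle\ge\|\phi'\|_{q'}-\varepsilon$. If $\phi'=0$, take $v=e_2$.

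Next let $\eta:=\operatorname{sign}(\phi_1)$, with $\eta=1$ if $\phi_1=0$, and define
\[
u:=c\,\eta\,e_1+a\,v .
\]
Since $v_1=0$, the first coordinate and the tail of $u$ have disjoint supports. Hence $\|u\|_q^q=c^q+a^q=1$ and $|u_1|=c$, so $u\in\mathcal K$ by \eqref{eq:D_lq}.

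\emph{Step 2 (lower bound for the supremum).} The sign choice is made precisely so that the two contributions add rather than cancel:
\[
\langle\phi,u\rangle=c|\phi_1|+a\langle\phi',v\rangle\ge c|\phi_1|+a\|\phi'\|_{q'}-a\varepsilon .
\]
Letting $\varepsilon\downarrow0$ gives
\[
\sup_{w\in\mathcal K}|\langle\phi,w\rangle|\ge c|\phi_1|+a\|\phi'\|_{q'}\ge \min\{c,a\}\bigl(|\phi_1|+\|\phi'\|_{q'}\bigr).
\]

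\emph{Step 3 (conclusion).} Since $\|\cdot\|_{q'}\le\|\cdot\|_1$ on $\mathbb R^2$,
\[
\|\phi\|_{q'}=\bigl\|(|\phi_1|,\|\phi'\|_{q'})\bigr\|_{q'}\le|\phi_1|+\|\phi'\|_{q'} .
\]
Combining this with Step 2 yields the claimed bound with $C(c,q)=1/\min\{c,(1-c^q)^{1/q}\}$. The norming property of $\mathcal K$ then follows immediately.

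The main obstacle is the constant rather than the qualitative norming property. Testing separately against $e_1$ and against $c e_1\pm a v$ only gives $\max\{|\phi_1|,a\|\phi'\|_{q'}\}$ as a lower bound, which leads to the weaker constant $1+1/a$. The point that makes the sharp constant work is using a single test vector whose signs are aligned, so that the lower bound is the sum $c|\phi_1|+a\|\phi'\|_{q'}$.
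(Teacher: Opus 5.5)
Your proof is correct and is essentially the paper's argument: the same single test vector $u=c\,\mathrm{sign}(\phi_1)e_1+(1-c^q)^{1/q}v$ with $v$ a norming vector for the tail, the same lower bound $c|\phi_1|+(1-c^q)^{1/q}\|\phi'\|_{q'}\ge \min\{c,(1-c^q)^{1/q}\}\,(|\phi_1|+\|\phi'\|_{q'})$, and the same final comparison $\|\phi\|_{q'}\le|\phi_1|+\|\phi'\|_{q'}$. Your version is slightly more careful than the paper's, because you explicitly handle $\phi_1=0$ and $\phi'=0$; for $\phi_1=0$, the paper's choice $u_1=c\,\mathrm{sign}(\phi_1)=0$ would not lie in $\mathcal K$.
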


\begin{proof}
Fix $\phi=(\phi_n)_{n\ge1}\in \ell^{q'}$ and write $\phi=\phi_1 e_1+\phi^{\mathrm{tail}}$,
where $\phi^{\mathrm{tail}}=(0,\phi_2,\phi_3,\dots)$.

Let $u\in \ell^q$ be defined by choosing
\[
u_1 := c\,\mathrm{sign}(\phi_1),
\qquad
\|u^{\mathrm{tail}}\|_q := (1-c^q)^{1/q},
\]
and taking $u^{\mathrm{tail}}$ to maximize $\langle \phi^{\mathrm{tail}},u^{\mathrm{tail}}\rangle$ under the constraint
$\|u^{\mathrm{tail}}\|_q=(1-c^q)^{1/q}$. By H\"older's inequality (with equality for the usual duality maximizer),
one can ensure
\[
\langle \phi^{\mathrm{tail}},u^{\mathrm{tail}}\rangle = (1-c^q)^{1/q}\,\|\phi^{\mathrm{tail}}\|_{q'}.
\]
Then $\|u\|_q^q=|u_1|^q+\|u^{\mathrm{tail}}\|_q^q=c^q+(1-c^q)=1$, so $u\in S_{\ell^q}$, and also $|u_1|=c$ so $u\in\mathcal K$.
Therefore
\[
\sup_{v\in\mathcal K}|\langle \phi,v\rangle|
\ge |\langle \phi,u\rangle|
= c|\phi_1| + (1-c^q)^{1/q}\,\|\phi^{\mathrm{tail}}\|_{q'}.
\]
Let $m:=\min\{c,(1-c^q)^{1/q}\}$. Then
\begin{align*}
c|\phi_1| + (1-c^q)^{1/q}\,\|\phi^{\mathrm{tail}}\|_{q'}
\ge& m\Big(|\phi_1|+\|\phi^{\mathrm{tail}}\|_{q'}\Big)\\
=& m\Big(\|\phi_1 e_1\|_{q'}+\|\phi^{\mathrm{tail}}\|_{q'}\Big)\ge m \|\phi\|_{q'}. 
\end{align*}
Hence, 
\[
\|\phi\|_{q'} \le \frac{1}{m}\,\sup_{v\in\mathcal K}|\langle \phi,v\rangle|.
\]
This proves the claim.
\end{proof}

\begin{remark}
Although $\mathcal K$ is norming and therefore $\overline{\mathrm{span}(\mathcal D)}=X$,
the dictionary $\mathcal D$ is a proper weakly closed cone, so the descent directions are genuinely restricted.
\end{remark}

\subsection{An example with $\mathcal D\neq \X$ for any reflexive Banach space of dimension greater than one}
Let $\X$ be any reflexive Banach space with $\dim(\X)\ge 2$, fix $n\in \NN$, and let $\left(\X_k\right)_{1\le k\le n}$ be proper complemented subspaces of $\X$ such that 
\begin{align*}
\X=&\sum_{k=1}^{n}\X_k. 
\end{align*}
For example, if $P:\X\rightarrow \X$ is a bounded, non-trivial projection, one can take $n=2$, $\X_1:=P(\X)$ and $\X_2:=(I-P)(\X)$. \\
Let 
\begin{align*}
&\cD:=\bigcup_{k=1}^{n}\X_k &&\text{and} &&&\cK:=\cD\cap S_{\X}. 
\end{align*}

\begin{proposition}The set $\mathcal D$ is a balanced cone which is weakly closed in $\X$ and has empty interior. In particular, $\mathcal D\neq \X$.
\end{proposition}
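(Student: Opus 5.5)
We verify the four claims separately: $\cD$ is a balanced cone, it is weakly closed, it has empty interior, and consequently $\cD\neq\X$.

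\emph{Balanced cone.} Each $\X_k$ is a linear subspace. Hence $\lambda x\in\X_k$ for every $x\in\X_k$ and every $\lambda\in\RR$. Taking the union over $k$ preserves this, so $\lambda\cD\subset\cD$ for all $\lambda\ge0$ and $\cD=-\cD$.

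\emph{Weak closedness.} A complemented subspace is the range of a bounded projection, say $\X_k=P_k(\X)=\ker(I-P_k)$. It is therefore norm closed. Being convex, it is also weakly closed by Mazur's theorem. Alternatively, it is an intersection of kernels of functionals of the form $x^*\circ(I-P_k)$. A finite union of weakly closed sets is weakly closed, so $\cD$ is weakly closed.

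\emph{Empty interior.} This is the step that needs an argument. I will use the Baire category theorem, or more elementarily the following fact: a proper closed subspace $Y\subsetneq\X$ has empty interior, because if $B(x,\varepsilon)\subset Y$ with $x\in Y$, then $B(0,\varepsilon)\subset Y$ and hence $Y=\X$.

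This fact alone is not enough, since a finite union of nowhere dense sets need not have empty interior unless each set is closed and nowhere dense. Here each $\X_k$ is closed with empty interior, so it is nowhere dense. A finite union of closed nowhere dense sets is closed and nowhere dense: apply the Baire property inductively, or directly note that if an open set $U$ lies in $F_1\cup F_2$, then either $U\setminus F_1$ is a nonempty open set contained in $F_2$, or $U\subset F_1$. Either way one gets a contradiction. Applying this with induction on $n$ shows that $\cD$ has empty interior.

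\emph{$\cD\neq\X$.} Since $\X$ itself has nonempty interior, $\cD\neq\X$ follows immediately. The properness of each $\X_k$, which is assumed, is exactly what is needed; the hypothesis $\dim\X\ge2$ only guarantees that such proper complemented subspaces summing to $\X$ exist. The main care lies in the nowhere-density step; everything else is routine.
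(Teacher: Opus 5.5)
Your proof is correct and follows the same route as the paper: each $\X_k$ is a closed, hence weakly closed, subspace; a union of subspaces is a balanced cone; and a finite union of proper closed subspaces has empty interior, a step the paper only asserts and for which you supply the nowhere-density argument. One wording slip: a finite union of nowhere dense sets is always nowhere dense, so the caveat you mean applies to sets that merely have empty interior, which is exactly why the closedness of the $\X_k$ matters in your argument.
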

\begin{proof}
For each $1\le k\le n$, $\X_k$ is closed because it is complemented. Since it is a subspace, it is weakly closed. Hence, $\cD$ is also weakly closed.\\
Now fix $x\in \cD$, and choose $1\le k\le n$ so that $x\in \X_k$. Since $\X_k$ is a subspace of $\X$, it follows that $tx\in \X_k\subset \cD$ for each $t\in \RR$. Hence, $\cD$ is a balanced cone. \\
Finally, since $\cD$ is the union of finitely many proper subspaces, $\cD$ has empty interior. 
\end{proof}

\begin{proposition}\label{propositiongeneralreflexive}
There is $C>0$ such that for every $\phi\in \X^*$,
\[
\|\phi\|
\le
C \sup_{u\in\mathcal K} |\langle \phi,u\rangle|,
\]
Hence, $\mathcal K$ is norming for $\X^*$. Moreover, in the case 
\begin{align}
\X=\oplus_{k=1}^{n}\X_k, \label{directsum}
\end{align}
the above holds for $C=n$. 
\end{proposition}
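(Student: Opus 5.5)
The plan is to pass through a decomposition of each unit vector $x$ into pieces lying in the subspaces $\X_k$, with norms controlled uniformly. Since $\X=\sum_{k=1}^n\X_k$ and each $\X_k$ is closed, the addition map
\[
T:\X_1\times\cdots\times\X_n\to\X,\qquad T(x_1,\dots,x_n)=\sum_{k=1}^n x_k,
\]
is a bounded linear surjection from the Banach space $\X_1\times\cdots\times\X_n$ (normed by $\sum_k\|x_k\|$) onto $\X$. By the open mapping theorem there is $C>0$ such that every $x\in\X$ admits a decomposition $x=\sum_k x_k$ with $x_k\in\X_k$ and $\sum_k\|x_k\|\le C\|x\|$. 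This is the only non-elementary input. Complementedness guarantees closedness of the $\X_k$, which is what the open mapping theorem needs; the fact that the $\X_k$ are proper is irrelevant here.

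With this decomposition, fix $\phi\in\X^*$ and $x\in S_{\X}$, and write $x=\sum_k x_k$ as above. For each $k$ with $x_k\neq0$ we have $x_k/\|x_k\|\in\X_k\cap S_{\X}\subset\cK$. Hence
\[
|\langle\phi,x\rangle|\le\sum_k\|x_k\|\,\Big|\Big\langle\phi,\tfrac{x_k}{\|x_k\|}\Big\rangle\Big|\le C\sup_{u\in\cK}|\langle\phi,u\rangle|.
\]
Taking the supremum over $x\in S_{\X}$ gives $\|\phi\|\le C\sup_{u\in\cK}|\langle\phi,u\rangle|$, so $\cK$ is norming. Boundedness of $\cK$ is immediate, since $\cK\subset S_{\X}$.

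For the direct-sum case \eqref{directsum}, the decomposition is unique and given by projections $P_k$ onto $\X_k$ along $\bigoplus_{j\ne k}\X_j$. The naive bound yields $C=\sum_k\|P_k\|$ rather than $n$, so getting exactly $C=n$ is the obstacle. My first attempt would be to argue directly on the functional: $\phi=\sum_k\phi\circ P_k$, and on $\X_k$ the functional $\phi$ agrees with $\phi\circ P_k$, so $\sup_{u\in\X_k\cap S_\X}|\langle\phi,u\rangle|=\|\phi|_{\X_k}\|$. The estimate $\|\phi\|\le\sum_k\|\phi|_{\X_k}\|\,\|P_k\|$ shows that the constant $n$ requires $\|P_k\|\le1$, i.e.\ contractive projections, as in $\ell^q$-type sums.

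I would therefore check whether the intended reading normalizes the direct sum, for instance via $\|\sum x_k\|$ equivalent to $\max_k\|x_k\|$, or assumes norm-one projections. Under the bound $\|x_k\|\le\|x\|$ the argument above gives $\sum_k\|x_k\|\le n\|x\|$ and hence $C=n$ at once. Otherwise I would state the constant as $\sum_k\|P_k\|$, which reduces to $n$ in the contractive case.
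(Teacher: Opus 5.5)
Your argument for the first claim is correct, and it is more robust than the paper's. The paper fixes arbitrary bounded projections $P_k$ onto $\X_k$, applies the open mapping theorem to $P=\sum_k P_k:\X\to\X$, and asserts that $P$ is onto. That depends on the choice of the $P_k$ and can fail. For example, on $\RR^2$ with $\X_1=\spn\{e_1\}$, $\X_2=\spn\{e_2\}$, the projections $P_1x=(x_1+x_2)e_1$ and $P_2x=(x_1+x_2)e_2$ give a $P$ whose range is the diagonal. Your addition map $T$ on $\X_1\times\cdots\times\X_n$ is onto exactly by the hypothesis $\X=\sum_k\X_k$, and it needs only closedness of the $\X_k$. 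It also avoids the reflexivity the paper uses to pick $y\in S_\X$ with $\|\phi\|=\langle\phi,y\rangle$, and it yields a clean constant. By contrast, the paper's bound $cn/\max_k\|P_k\|$ should read $cn\max_k\|P_k\|$.

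For the direct-sum case, the paper's proof simply asserts $\|P_k\|=1$ and concludes $C=n$; its last display also drops the factor $n$. So the paper is implicitly in exactly your contractive case. Your hesitation is justified: for a general topological direct sum the claim $C=n$ is false. In Euclidean $\RR^2$ take $\X_1=\spn\{e_1\}$ and $\X_2=\spn\{(\cos\theta,\sin\theta)\}$ with $0<\theta\le\pi/2$. The unit functional $\phi=(-\sin(\theta/2),\cos(\theta/2))$ satisfies $\sup_{u\in\cK}|\langle\phi,u\rangle|=\sin(\theta/2)$, so any admissible constant is at least $1/\sin(\theta/2)$, which exceeds $2=n$ for $\theta<\pi/3$. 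Your proposed repair, $C=\sum_k\|P_k\|$, is therefore the correct statement; it equals $n$ when the projections are contractive, e.g.\ for $\ell^q$-sums.

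One caveat concerns your sentence that the estimate $\|\phi\|\le\sum_k\|\phi|_{\X_k}\|\,\|P_k\|$ ``shows'' that $C=n$ requires $\|P_k\|\le1$. That is not valid reasoning, since an upper bound cannot establish necessity. Indeed, at $\theta=\pi/3$ one has $\|P_k\|=2/\sqrt3>1$, while the optimal constant is exactly $2$. The right justification for abandoning $C=n$ is a counterexample like the one above.
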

\begin{proof}
For each $k\in \NN$, let $P_k:\X\rightarrow \X_k$ be a bounded projection. Define $P:\X\rightarrow \X$ by
\begin{align*}
P(x):=&\sum_{k=1}^{n}P_k(x).
\end{align*}
As $P$ is onto, by the Open Mapping Theorem there is $c>0$ such that 
\begin{align*}
B_{\X}(0,1)\subset P(B_{\X}(0,c)). 
\end{align*}
Given $\phi \in X^*$, choose $y\in S_\X$ so that $\|\phi\|=\langle \phi, y\rangle$, and then $z\in B_{\X}(0,c)$ so that $P(z)=y$. We have 
\begin{align*}
\|\phi\|=&\langle \phi, P(z)\rangle = c \sum_{k=1}^{n}\langle \phi, c^{-1} P_k(z)\rangle\le cn\sup_{z\in B_{\X}(0,c)}\max_{1\le k\le n}\left\vert \langle \phi, c^{-1} P_k(z)\rangle\right\vert\\
\le& \frac{c n}{\max_{1\le k\le n}\|P_k\|} \sup_{u\in \cK}\left\vert \langle \phi, u\rangle\right\vert, 
\end{align*}
so we obtain the result with $C=\frac{c n}{\max_{1\le k\le n}\|P_k\|}$.\\
In the case \eqref{directsum} holds, we can choose the projections so that, for each $x\in X$, 
\begin{align*}
x=&\sum_{k=1}^{n}P_k(x). 
\end{align*}
Note that $\|P_k\|=1$ for all $1\le k\le n$. \\
Let $\phi$ and $y$ be as before. We have 
\begin{align*}
\|\phi\|=&\langle \phi, y\rangle=\sum_{k=1}^{n}\langle \phi, P_k(y)\rangle\le n \max_{1\le k\le n}\left\vert \langle \phi, P_k(y)\rangle\right\vert\le   \sup_{z\in \cK}\left\vert  \langle \phi, z\rangle\right\vert, 
\end{align*}
and the proof is complete. 
\end{proof}

\bibliographystyle{siamplain}
\bibliography{references}

\end{document}